\numberwithin{equation}{subsection}
\newtheorem{propo}{Proposition}[section]
\newtheorem{corol}[propo]{Corollary}
\newtheorem{theor}[propo]{Theorem}
\newtheorem{lemma}[propo]{Lemma}
\theoremstyle{definition}
\theoremstyle{remark}
\let\oldmarginpar\marginpar
\renewcommand\marginpar[1]{\oldmarginpar{\footnotesize #1}}
\newcommand{\CC}{\mathbb{C}}
\newcommand{\RR}{\mathbb{R}}
\newcommand{\Hom}{\operatorname{Hom}}
\newcommand{\Ker}{\operatorname{Ker}}
\newcommand{\Int}{\operatorname{Int}}
\begin{document}

\title[Quasi-Poisson structures on moduli spaces of quasi-surfaces]{Quasi-Poisson structures on moduli spaces of quasi-surfaces}

   \author[Vladimir Turaev]{Vladimir Turaev}
    \address{
    Vladimir Turaev \newline
    \indent   Department of Mathematics \newline
    \indent  Indiana University \newline
    \indent Bloomington IN47405, USA\newline
    \indent and  \newline
    \indent IRMA, Strasbourg \newline
    \indent 7 rue Rene Descartes \newline
    \indent 67084 Strasbourg, France \newline
    \indent $\mathtt{vturaev@yahoo.com}$}

\begin{abstract} In generalization of the  classical Atiyah-Bott Poisson brackets   on the moduli spaces of  surfaces  we define  quasi-Poisson brackets   on the  moduli spaces of   quasi-surfaces.  
\end{abstract}

\maketitle

\section {Introduction}

Moduli spaces of  surfaces    carry beautiful geometric structures,   in particular,  Poisson brackets, see  \cite{AB}.  Our   goal  is to extend these   brackets to the more general setting
of quasi-surfaces   introduced in~\cite{Tu3}.  It turns out  that  the appropriate  version of the Jacobi identity    involves  both  a 2-bracket and   a  3-bracket. Pairs  of brackets satisfying this modified Jacobi identity are said to be quasi-Poisson.        Our main result    is a  construction of  quasi-Poisson pairs of brackets  on   the moduli spaces of   quasi-surfaces.

  A part of this work  concerns an arbitrary  algebra~$A$.  Following \cite{VdB},  \cite{Cb},  for any   integer $n\geq 1$,  we consider a  trace  algebra   $A^t_n  $ which, under appropriate   assumptions on~$A$  and the ground ring,    is the coordinate  algebra of  the affine quotient scheme ${\text {Rep}}_n(A)//GL_n$. We  view this affine scheme  as  the  moduli space  of $n$-dimensional representations of~$A$ and we view the trace  algebra $A^t_n$ as the algebra of functions on this space.  In generalization of the work of    Crawley-Boevey \cite{Cb},  we  show  how to derive  brackets in   
the trace algebras    from so-called  braces in~$A$.    We  formulate conditions on  the  braces  ensuring  that    the induced  brackets in   
the trace algebras form  quasi-Poisson pairs.

Quasi-surfaces are topological spaces generalizing surfaces with boundary by allowing singular (non-surface) parts. A number of homotopy-invariant operations  on loops in surfaces generalize to quasi-surfaces, see \cite{Tu3} and Sections~\ref{A topological example}, \ref{Quasi-surfaces and braces} below. We  show that   these  operations are  braces in   the group algebra~$A$   of  the fundamental group.  We then show that the induced   brackets in   
the trace algebras $\{A^t_n\}_{n\geq 1}$ form  quasi-Poisson pairs.

The first part of the paper (Sections \ref{Preliminaries},~\ref{AMdddfT0}) presents our  algebraic methods and the second part  (Sections \ref{A topological example2}--\ref{Quasi-surfaces and braces}) is devoted to   the topological results. In the appendix we discuss a construction of braces  from Fox derivatives.  

This work was  supported by the NSF grant DMS-1664358.

\section{Algebraic preliminaries}\label{Preliminaries}


Throughout the paper we fix a commutative base ring~$R$. By a module we mean an $R$-module and by an algebra we mean an associative $R$-algebra  with unit.  

 \subsection{Quasi-Lie  algebras}\label{Bracketsssy modules}    Given an integer $m\geq 1$, an   \emph{$m$-bracket} in a module~$M$ is   a   map  $ {M}^{m}\to {M}$ which is linear in all $m$ variables.  Here
 $M^m$ is  the  direct product of~$m$ copies of~$M$.  
An $m$-bracket $\mu$ in~$M$     is \emph{cyclically symmetric} if  for all  $x_1,..., x_m\in {M}$, we have
$$\mu(x_1 , \ldots ,  x_{m-1},x_m)=   \mu(x_m , x_1, \ldots , x_{m-1}  ).$$
A $2$-bracket $\mu$ in~$M$ is \emph{skew-symmetric} if $\mu(x_1 , x_2)=-\mu(x_2, x_1)$
for all  $x_1,  x_2\in {M}$.

   Following \cite{Tu4},    a \emph{quasi-Lie algebra} is    a  module~$M$ carrying a  skew-symmetric 2-bracket $[-,-]$  and   a  cyclically symmetric  3-bracket
$[-,-,-]$ such that
  for any $x,y,z \in M$, we have the following quasi-Jacobi identity:
\begin{equation}\label{Jaco} [[x,y],z] + [[y,z], x]+ [[z,x], y]= [x,y,z]- [z, y,x] .  \end{equation}  
Note that both sides of \eqref{Jaco}  are   cyclically symmetric and  the left-hand side  is the usual  Jacobiator of the 2-bracket $[-,-]$.     Such a pair  of brackets $[-,-], [-,-,-]$    is called  a \emph{quasi-Lie pair}. We recover the usual Lie algebras   when  $[-,-,-]=0$. 

A   \emph{quasi-Lie  algebra homomorphism} from a quasi-Lie  algebra $M$   to a   quasi-Lie  algebra $N$ is a  linear map $f: M \to N$ such that  $[f(x), f(y)]=f([x,y])$ and $[f(x), f(y), f(z)]=f([x,y,z])$ for all $x,y,z\in M$.

 \subsection{Weak derivations} \label{coordrerdd2w}  For an algebra~$A$, we let $A' $ be the submodule of~$A$ spanned by the commutators $ \{xy-yx \, \vert \,  x,y \in A\}$. The quotient module $ \check A = A/ A'$  is the zeroth Hochschild homology of~$A$. 
 A {\it derivation} of~$  {A}$
 is a linear map $d:{   A}\to {   A}$ such that
   $d(xy)= d(x) y+  x d(y)$    for all $x,y \in  {   A}$.    
The identity
$$d(xy-yx )=  d(x) y - yd(x)  +   x d(y) -d(y)x     $$ shows that
the  derivations  of~$ { A}$ carry $A' $ into itself  and   induce  linear endomorphisms of the module  $\check A$. 
A linear   endomorphism of  $ \check A $ is    a \emph{weak  derivation} if it is induced by a derivation  of~$  { A}$.

  \subsection{Braces} \label{coordrerdd2w}   An    \emph{$m$-brace} in an algebra~$A$ is  an $m$-bracket    in the module~$\check A$   which is a weak  derivation  in all~$m$ variables. Thus, an $m$-brace in~$A$ is a map $\mu:(\check A)^m \to \check A$ such that     for any $1\leq j\leq  m$ and   $x_1,..., x_{j-1}, x_{j+1},...,x_{m} \in  \check A$, the     map $$\check A \to \check A, \, \, x\mapsto \mu (x_1, ... , x_{j-1} , x , x_{j+1} , ... ,x_m)$$ is a weak derivation.  
 A $1$-brace in~$A$ is just  a weak derivation   $\check A \to \check A$.


If~$A$ is a  commutative algebra, then $   \check A=A$ and  an  $m$-brace in~$A$  is  an $m$-bracket $A^m\to A$   which is a    derivation  in all~$m$  variables. The following    lemma fully describes   braces in    polynomial algebras.

\begin{lemma}\label{veryfirst}   Let~$X$ be a set and  let $A=R[X]$ be the (commutative) algebra of polynomials in the variables $\{x\}_{x\in X}$ with coefficients in~$R$. For any $m\geq 1$ and any mapping $ f:X^m \to A$, there is a unique $m$-brace $F:A^m\to A$ in~$A$ such that $F(x_1,..., x_m)=f(x_1,..., x_m)$ for all $x_1,..., x_m\in X$. 
\end{lemma}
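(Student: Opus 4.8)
The plan is to treat uniqueness and existence separately, after recording that for the commutative algebra $A=R[X]$ an $m$-brace is simply an $R$-multilinear map $F\colon A^m\to A$ that is a derivation in each one of its $m$ arguments, the remaining arguments being held fixed at arbitrary elements of $A$ (this is exactly the description given just before the lemma, using $\check A=A$).

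For uniqueness, the key elementary fact is that a derivation $d\colon A\to A$ of a polynomial algebra is completely determined by the values $d(x)$, $x\in X$: from $d(1)=d(1\cdot 1)=2\,d(1)$ one gets $d(1)=0$, hence $d$ vanishes on $R\subseteq A$ by $R$-linearity, and then the Leibniz rule together with induction on degree expresses $d$ of any monomial as a fixed polynomial combination of the $d(x)$, so linearity determines $d$ on all of $A$. Granting this, suppose $F$ and $F'$ are $m$-braces in $A$ with $F|_{X^m}=f=F'|_{X^m}$. I would show by induction on $k=0,1,\dots,m$ that $F(p_1,\dots,p_k,x_{k+1},\dots,x_m)=F'(p_1,\dots,p_k,x_{k+1},\dots,x_m)$ for all $p_1,\dots,p_k\in A$ and all $x_{k+1},\dots,x_m\in X$. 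The case $k=0$ is the hypothesis. For the inductive step, fix $p_1,\dots,p_k\in A$ and $x_{k+2},\dots,x_m\in X$; by the brace axiom the map $q\mapsto F(p_1,\dots,p_k,q,x_{k+2},\dots,x_m)$ and its analogue built from $F'$ are derivations $A\to A$ which, by the inductive hypothesis, agree on every generator $x\in X$, hence agree everywhere by the key fact. Taking $k=m$ yields $F=F'$.

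For existence, I would exhibit an explicit formula. For $x\in X$ let $\partial_x\colon A\to A$ denote the usual partial derivative, i.e. the unique derivation of $A$ with $\partial_x(y)=\delta_{x,y}$ for $y\in X$; note that for any $p\in A$ one has $\partial_x p=0$ for all but finitely many $x\in X$. Define $F\colon A^m\to A$ by
\begin{equation*}
F(p_1,\dots,p_m)\ :=\ \sum_{(x_1,\dots,x_m)\in X^m} f(x_1,\dots,x_m)\,\partial_{x_1}(p_1)\cdots\partial_{x_m}(p_m),
\end{equation*}
which is a finite sum for each $(p_1,\dots,p_m)$. This $F$ is visibly $R$-multilinear, and since each $\partial_{x_i}$ satisfies the Leibniz rule and $A$ is commutative, $F$ is a derivation in each of its arguments; thus $F$ is an $m$-brace in $A$. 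Finally $\partial_x(y)=\delta_{x,y}$ gives $F(x_1,\dots,x_m)=f(x_1,\dots,x_m)$ for all $x_1,\dots,x_m\in X$, as required.

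The only genuinely delicate points are the two preparatory facts about the polynomial algebra: that $\partial_x p$ is eventually zero (so the displayed sum is finite and $F$ well defined) and that a derivation is determined by its values on the generators; both are standard. I expect the verification that the displayed $F$ is a derivation in, say, its first argument — an immediate consequence of $\partial_{x_1}(pq)=\partial_{x_1}(p)\,q+p\,\partial_{x_1}(q)$ together with commutativity of $A$ — to be the "main" computation, and it is essentially a one-line one.
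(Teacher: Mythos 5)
Your proposal is correct and follows essentially the same route as the paper: the paper also defines $F$ by the explicit formula $F(a_1,\dots,a_m)=\sum_{x_1,\dots,x_m\in X}\bigl(\prod_i \partial a_i/\partial x_i\bigr)f(x_1,\dots,x_m)$, notes finiteness of the sum, and derives the derivation property from the Leibniz rule, while dismissing uniqueness as clear since $X$ generates $A$. Your only addition is spelling out the uniqueness argument (derivations determined on generators, plus the induction over arguments), which the paper leaves implicit.
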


  \begin{proof} The uniqueness of~$F$  is clear  as the set~$X$ generates the algebra~$A$. We define~$F$ by the following explicit formula: for any   $a_1, ..., a_m \in A$, set 
  $$F(a_1,..., a_m) =\sum_{x_1,..., x_m \in X} \big( \prod_{i=1}^m  \frac{\partial a_i}{\partial x_i} \big ) \,  f(x_1,..., x_m).$$
Here the right-hand side has only a finite number of non-zero terms as each $a_i$
 is a polynomial  in the variables $\{x\}_{x\in X}$ and therefore ${\partial a_i}/{\partial x} \neq 0$ for only a finite set of $x\in X$.
That~$F$  is a    derivation  in all  its variables   follows  from the Leibnitz formula for the partial derivatives.
\end{proof}


  \subsection{Quasi-Poisson algebras}\label{Teadsrmy}  
A   \emph{quasi-Poisson algebra} is an algebra~$A$  endowed with a quasi-Lie pair of brackets in  the module ~$\check A$ which both are braces in~$A$.  Such a pair  of braces is called  a \emph{quasi-Poisson pair}.
A commutative quasi-Poisson algebra  with zero 3-brace is a  Poisson algebra in the usual sense.

  A   \emph{quasi-Poisson  algebra homomorphism} from a quasi-Poisson  algebra $A$   to a   quasi-Poisson  algebra $B$ is a   quasi-Lie algebra homomorphism $ \check A \to \check B$.

 The definition of a quasi-Poisson algebra  given above differs   from parallel  definitions  in \cite{AKsM}, \cite{MTnew}. The main point of difference is that here we not involve actions of Lie groups or Lie algebras. Our quasi-Poisson algebras   generalize   $H_0$-Poisson   algebras introduced by  W. Crawley-Boevey \cite{Cb}. In our  terminology, an $H_0$-Poisson structure  in an algebra~$A$  is a   Lie   bracket  in~$ \check  A$ which is a brace in~$A$.

\section{Trace algebras}\label{coordinate2}\label{AMdddfT0}

We    recall  representation schemes  and trace   algebras following \cite{VdB}, 
   \cite{Cb}. Then we   discuss braces in trace  algebras.

 \subsection{Representation schemes}\label{coordinateddd2w-}  
An   algebra~$A$  and an integer $n\geq 1$ determine an affine scheme  ${\rm {Rep}}_n (A)$, the  \emph{$n$-th  representation scheme} of~$A$.   For each commutative algebra~$S$,  the set of $S$-valued points of ${\rm {Rep}}_n (A)$  is the set of
 algebra homomorphisms  $A \to {\rm Mat}_n(S)$. The coordinate algebra, $A_n$,  of the affine scheme ${\rm {Rep}}_n (A)$
is    generated over~$R$  by
the symbols $x_{ij}$ with $x\in A$ and $i,j  \in  \{1,2, \ldots, n\}$. These generators commute  
and satisfy the following relations:     $1_{ij}=\delta_{ij}$ for all $i,j$,  where
$\delta_{ij}$ is the Kronecker delta; for all $x,y\in A$, $ r\in R $,  and $i,j\in \{1,2, \ldots, n\}$,
$$  (rx)_{ij}= r x_{ij}, \quad    (x + y)_{ij}=x_{ij}+ y_{ij} \quad {\rm {and}} \quad (xy)_{ij}= \sum_{l=1}^n \,  x_{il}\,
y_{lj}. $$
The function on the set of $S$-valued points of ${\rm {Rep}}_n (A)$  determined by    $x_{ij}$ assigns to a homomorphism $f: A \to {\rm Mat}_n(S) $   the $(i,j)$-entry of the  matrix $f(x)  $.   That these functions satisfy the relations   above  is straightforward. 

The  action of the group $G=GL_n(R)$  on 
$ \Hom (A,  {\rm Mat}_n(S))$ by conjugations  induces an action of~$G$ on    $A_n$ for all~$n$. Explicitly, for $g=(g_{kl})\in G$ and any $x\in A, i,j \in \{1,..., n\}$ we have
$$g \cdot x_{ij}=\sum_{k, l=1}^n  g_{ik} (g^{-1})_{lj} x_{kl}.$$
The set of invariant elements $A_n^G=\{a\in A_n\, \vert \, Ga =a \}   $ is a subalgebra of $A_n$. 
This   is     the coordinate algebra of the affine quotient scheme ${\rm Rep}_n (A)//G$  which we view as the  moduli  space of $n$-dimensional representations of~$A$.

The linear map $A\to A_n, x \mapsto
\sum_{i=1}^n  x_{ii} $   is      called the \emph{trace} and denoted    ${\rm tr}$.  The trace  
 annihilates all the commutators in~$A$ and so  ${\rm tr}(A')=0$. 
Thus, the trace induces a
  linear map $ \check A \to A_n$     also denoted~${\rm tr}$. The subalgebra of $A_n$ generated by ${\rm {tr}} (A) ={\rm {tr}} (\check A)  $ is  denoted $A^t_n$ and is called the \emph{$n$-th trace algebra of~$A$}.  A direct computation shows that  ${\rm {tr}} (A) \subset A_n^G $ and therefore $A^t_n \subset A_n^G $.   If  the ground ring~$R$ is an algebraically closed field of  characteristic zero and~$A$ is a finitely generated algebra,  then  a theorem of   Le Bruyn and Procesi \cite{Pro}   implies that    $A^t_n=A^G_n$ so that  $A^t_n$ is         the coordinate algebra of     ${\rm Rep}_n (A)//G$.

  \subsection{Braces in   trace algebras}   The following lemma - inspired by W. Crawley-Boevey \cite{Cb} -    will allow us  to construct braces in the trace algebras.
  
\begin{lemma}\label{Cbtlemmmmheor}  For any integers $m, n\geq 1$ and any   $m$-brace $\mu$ in an algebra~$A$, there is a unique   $m$-brace $\mu_n$ in the algebra  $ A_n^t$ 
such that the following diagram commutes: 
\begin{equation}\label{dia1}
\xymatrix{
{\check A}^m \ar[rr]^-{\mu} \ar[d]_{{ \rm{tr}}  \times \cdots \times  {\rm{tr}}} & & \check A\ar[d]^-{{ {\rm{tr}}}}\\
{(A^t_n)^m} \ar[rr]^-{\mu_n}&  &A^t_n.
} \end{equation}
If $\mu$ is cyclically symmetric, then so is $\mu_n$. If $m=2$ and~$\mu$ is skew-symmetric, then $\mu_n$ is skew-symmetric.
\end{lemma}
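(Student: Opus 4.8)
The plan is to produce the bracket $\mu_n$ explicitly and then verify that the defining properties go through. Since the trace algebra $A^t_n$ is generated as an $R$-algebra by the elements $\tr(a)$ with $a \in \check A$, uniqueness of $\mu_n$ is automatic once we know it exists and is a derivation in each variable: a derivation of $A^t_n$ is determined by its values on a generating set. So the content is existence. First I would define, for generators, $\mu_n(\tr(a_1),\dots,\tr(a_m)) := \tr(\mu(a_1,\dots,a_m))$; the first task is to check this is well defined, i.e. independent of the choice of representatives $a_i \in \check A$ of the given elements $\tr(a_i) \in A^t_n$. This is not quite trivial because $\tr \colon \check A \to A^t_n$ need not be injective; I would need to use that $\mu$ is a weak derivation in each slot, so that $\mu(\dots, a_i, \dots)$ depends on $a_i$ only through a derivation of $A$, together with the fact (to be extracted below) that $\tr(d(a)) = 0$ whenever $\tr(a) = 0$ for $d$ a derivation — or more precisely a trick reducing the ambiguity to something $\tr$ kills.

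Here is the cleaner route I would actually take, following Crawley-Boevey's idea. Rather than defining $\mu_n$ slot by slot on generators of $A^t_n$, observe that each weak derivation $\delta$ of $\check A$, induced by an honest derivation $d$ of $A$, canonically induces a derivation $\delta_n$ of $A_n$ (hence of the invariant subalgebra, hence — once one checks it preserves $A^t_n$ — of $A^t_n$) by the formula $\delta_n(x_{ij}) = (d(x))_{ij}$ on generators; one checks this respects the defining relations of $A_n$ exactly because $d$ is a derivation of $A$, and that it is compatible with the trace in the sense that $\delta_n(\tr a) = \tr(d(a)) = \tr(\delta(a))$. A key point is that this derivation $\delta_n$ of $A^t_n$ depends only on the weak derivation $\delta$, not on the lift $d$: two lifts differ by a derivation sending $A$ into $A'$, and such a derivation induces the zero map on $\check A$ and — one must check — induces a derivation of $A_n$ that kills $\tr(A)$, hence kills all of $A^t_n$. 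Granting this, define $\mu_n(\tr a_1, \dots, \tr a_m)$ on generators by fixing all but the $j$-th argument, taking the weak derivation $a \mapsto \mu(a_1,\dots,a,\dots,a_m)$, passing to its induced derivation of $A^t_n$, and evaluating on $\tr a_j$; the value is $\tr(\mu(a_1,\dots,a_m))$ and, crucially, is independent of the representative $a_j$ because the induced derivation of $A^t_n$ only sees the weak derivation. Doing this in each slot shows $\mu_n$ is well defined and extends uniquely to a map $(A^t_n)^m \to A^t_n$ that is a derivation — hence a brace — in every variable. Commutativity of~\eqref{dia1} holds by construction.

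The final sentence about symmetry is then routine: if $\mu$ is cyclically symmetric, then on generators $\mu_n(\tr a_1,\dots,\tr a_m) = \tr(\mu(a_1,\dots,a_m)) = \tr(\mu(a_m, a_1,\dots,a_{m-1})) = \mu_n(\tr a_m, \tr a_1,\dots,\tr a_{m-1})$, and since cyclic permutations of the arguments and the (derivation) extension to all of $(A^t_n)^m$ commute, cyclic symmetry propagates from generators to all arguments. The skew-symmetric case $m=2$ is identical, using $\mu(a_1,a_2) = -\mu(a_2,a_1)$ and linearity of $\tr$.

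The step I expect to be the main obstacle is the well-definedness: showing that the derivation of $A_n$ (and then of $A^t_n$) induced by a lift $d$ of a weak derivation $\delta$ is independent of the lift, equivalently that a derivation $d$ of $A$ with $d(A) \subseteq A'$ induces a derivation $d_n$ of $A_n$ with $d_n(A^t_n) = 0$. One has $d_n(\tr x) = \tr(d(x))$ and $d(x) \in A'$, so $\tr(d(x)) = 0$; thus $d_n$ kills the generators $\tr(A)$ of $A^t_n$, and being a derivation it kills the whole subalgebra $A^t_n$ they generate. Everything else — the Leibniz check that $\delta_n$ respects the relations defining $A_n$, the verification that $\delta_n$ preserves $A^t_n$, the passage from "derivation in each slot, defined on generators" to "brace on $(A^t_n)^m$" — is a direct computation of the type already carried out in Lemma~\ref{veryfirst} and in the construction of the $G$-action, so I would present it briskly.
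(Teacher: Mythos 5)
Your uniqueness argument, your treatment of the case $m=1$ (a weak derivation of $\check A$ induces a derivation of $A_n$ via a lift, restricts to $A^t_n$, and the restriction is independent of the lift because $\tr$ kills $A'$), and your use of that case to show that $\tr(\mu(a_1,\dots,a_m))$ depends only on the traces $\tr(a_i)$ are all correct and match the paper's ingredients. The gap is in the step you dismiss at the end: passing from a map defined on tuples of \emph{generators} $(\tr a_1,\dots,\tr a_m)$ to a genuine $m$-brace on all of $(A^t_n)^m$. You justify this by analogy with Lemma~\ref{veryfirst}, but that lemma concerns a \emph{free} polynomial algebra, where any assignment on generators extends because there are no relations. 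Here $A^t_n$ is a quotient of such an algebra by unknown relations, and the entire difficulty for $m\geq 2$ is to show that the Leibniz-expanded formula is independent of the chosen polynomial expression of each argument as a polynomial in traces. Your slot-by-slot scheme only produces, for fixed generators in slots $2,\dots,m$, a derivation in slot $1$; to continue you need the partially defined map to be (well defined and) a derivation in slot $2$ with an \emph{arbitrary} element of $A^t_n$ in slot $1$, and that no longer comes directly from a weak derivation of $\check A$. This can be repaired by an induction over slots (express the slot-$j$ map as an $A^t_n$-linear combination of the derivations coming from the $m=1$ case, then check independence of choices by evaluating on generators), but as written the proposal asserts the conclusion rather than proving it, and locates the ``main obstacle'' in the easy independence-of-lift issue rather than here.

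For comparison, the paper resolves exactly this point by routing through the free commutative algebra $B=R[\check A]$ on the set $\check A$: Lemma~\ref{veryfirst} (legitimately, since $B$ is free) produces an $m$-brace $F$ on $B$ extending $\mu$, and the surjection $\tau\colon B\to A^t_n$, $x\mapsto\tr(x)$, is shown to satisfy $\tau F(y_1,\dots,y_m)=0$ whenever some $y_i\in\Ker\tau$. That verification is where your $m=1$ derivation $D$ enters: one reduces to $y_2,\dots,y_m\in\check A$ using the Leibniz rule, and then proves $\tau F(y,y_2,\dots,y_m)=D\tau(y)$ for all $y\in B$ by a computation on monomials, so that $y\in\Ker\tau$ forces the value to vanish. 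The brace $F$ then descends to the desired $\mu_n$ on $(A^t_n)^m=(B/\Ker\tau)^m$. I recommend you either adopt this descent argument or write out the slot-by-slot induction in full; the one-sentence appeal to Lemma~\ref{veryfirst} does not cover the quotient.
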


  \begin{proof} The uniqueness of $\mu_n$ is clear  as the set ${\rm {tr}} (A)$ generates the algebra $A^t_n$.    We first prove the existence of $\mu_n$  for $m=1$.   We need to show  that  for any weak derivation $\mu:\check A\to \check A$, there is a   derivation of $  A^t_n$ carrying ${\rm {tr}} (x)$ to $ {\rm {tr}} (\mu(x))$ for all $x\in \check A$.   Pick a  derivation $d:A\to A$ inducing $\mu$.    Then  there is a unique  derivation $\widetilde d:A_n\to A_n$ such that $\widetilde d (a_{ij})= (d(a))_{ij}$ for all $a\in A, i,j\in \{1,..., n\}$. Indeed, this formula defines~$\widetilde d$ on the generators of the algebra~$A_n$;   the compatibility with the defining  relations    is straightforward, see \cite[Lemma  4.4]{Cb}.  Clearly,   $\widetilde  d ({\rm {tr}} (a)) ={\rm {tr}} (d(a))$ for all $a\in   A$.  Therefore $\widetilde d(A^t_n) \subset A^t_n$ and the restriction of $\widetilde d$   to $A^t_n$ is a derivation of the algebra $A^t_n$  satisfying our requirements.

Suppose now that $m\geq 2$ and consider the (commutative) polynomial algebra $B=R[\check A]$ in the
variables $\{x\}_{x\in \check A}$. By
 Lemma~\ref{veryfirst},     there is a unique $m$-brace $F:B^m\to B$  such that $$F(x_1,..., x_m)=\mu (x_1,..., x_m) \in \check A\subset B  \quad {\text {\rm  for all}} \quad  x_1,..., x_m\in \check A.$$
 Let $\tau:B\to A^t_n$ be the algebra homomorphism carrying each  $x\in \check A\subset B$ 
 to ${\rm {tr}}(x) \in A^t_n$.   Claim: if $y_1\in \Ker \tau$, then    $  \tau F(y_1,y_2, ..., y_m) =0$ for all $y_2,..., y_m\in B$.  Since~$F$ is a derivation in $y_2,..., y_m$, it suffices to prove this  claim
  when  $y_2,..., y_m \in  \check A\subset B$. Consider   the linear endomorphism $x\mapsto  \mu(x, y_2, ..., y_m)$ of~$\check A$. Since~$\mu$ is a brace in~$A$,  this endomorphism is a weak derivation. 
 By the case $m=1$ discussed above, there is a   derivation $D: A^t_n \to A^t_n$ such that   $$ D({\rm {tr}} ( x)) ={\rm {tr}} ( \mu(x, y_2, ..., y_m))   $$  for all  $ x \in \check A$.
 Then for 
  any finite sequence $x_1, ..., x_N \in \check A$, we have
 $$\tau  F(\prod_{l=1}^N  x_l, y_2,..., y_m) = \tau  \big ( \sum_{k=1}^N   F(x_k, y_2,..., y_m)  \prod_{l\neq k} x_l \big )$$
 $$= \sum_{k=1}^N   \tau  F(x_k, y_2,..., y_m) \prod_{l\neq k} {\rm {tr}}(x_l)= \sum_{k=1}^N   {\rm {tr}}  (\mu(x_k, y_2,..., y_m)) \prod_{l\neq k} {\rm {tr}}(x_l)$$
 $$
 =\sum_{k=1}^N   D({\rm {tr}} ( x_k)) \prod_{l\neq k} {\rm {tr}}(x_l) 
 =D(\prod_{l=1}^N {\rm {tr}}(x_l) ) =D\tau (\prod_{l=1}^N  x_l).$$
Since each $y \in B$ is a linear combination of monomials in the generators $\{x\}_{x\in \check A}$ we have  $\tau  F(y, y_2,..., y_m) = D\tau (y)$. For   $y_1\in \Ker \tau  $, we get 
 $\tau F(y_1, y_2,..., y_m)  =0$. 
 Similar arguments show that if $y_i\in \Ker \tau$ for some $i=1,..., m$ then $ \tau F(y_1,..., y_m) =0$. This  and the surjectivity of~$\tau$ imply  the existence of a 
 mapping $\mu_n: (A^t_n)^m \to A^t_n$ such that the following diagram commutes: 
$$
\xymatrix{
B^m \ar[rr]^-{F} \ar[d]_{\tau \times \cdots \times \tau} & & B\ar[d]^-{\tau}\\
{(A^t_n)^m} \ar[rr]^-{\mu_n}&  &A^t_n.
} $$
  Since~$F$ is a derivation in all variables and $\tau$ is an algebra epimorphism,   $\mu_n$   is a derivation in  all variables, i.e.,  a brace. Restricting the   latter diagram   to $\check A  \subset B $ we obtain the diagram  \eqref{dia1}.   
 The last two claims of the lemma are straightforward.
   \end{proof}
  
\begin{lemma}\label{Cbtlemyahhhmmmheor} Let $Z$ be a commutative algebra  carrying a  skew-symmetric   2-brace
$[-,-]$  and a cyclically symmetric   3-brace  $[-, -,-]$.   If  the  relation   \eqref{Jaco} holds for   elements of a generating set of~$Z$, then it holds for all elements of~$Z$.    
\end{lemma}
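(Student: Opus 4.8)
The plan is to prove the stronger statement that the trilinear ``quasi-Jacobiator''
\[
J(x,y,z):=[[x,y],z]+[[y,z],x]+[[z,x],y]-[x,y,z]+[z,y,x]
\]
is a \emph{triderivation} of $Z$, i.e.\ a derivation of the commutative algebra $Z$ in each of its three arguments separately. Granting this, the lemma is immediate: a derivation $d$ of $Z$ that vanishes on a generating set $X$ vanishes identically, since $\{a\in Z:d(a)=0\}$ is an $R$-submodule of $Z$ containing $1$ (as $d(1)=2d(1)$) and closed under multiplication by the Leibniz rule, hence is an $R$-subalgebra, and it contains $X$. So if $J$ vanishes on $X$, then fixing $y,z\in X$ the derivation $x\mapsto J(x,y,z)$ is zero, giving $J(x,y,z)=0$ for all $x\in Z$ and $y,z\in X$; fixing then an arbitrary $x\in Z$ and $z\in X$, the derivation $y\mapsto J(x,y,z)$ is zero; one further iteration in the third slot yields $J\equiv 0$, which is exactly~\eqref{Jaco}.

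To check that $J$ is a triderivation I would handle its two pieces separately, using throughout that $[-,-]$ and $[-,-,-]$ are braces in $Z$ (hence derivations in each variable) and that $Z$ is commutative. For the $3$-bracket part $J_3(x,y,z):=[x,y,z]-[z,y,x]$ this is routine: expand $[xx',y,z]$ by the derivation property in the first slot and $[z,y,xx']$ by the derivation property in the third slot, and rearrange the products by commutativity; this gives $J_3(xx',y,z)=xJ_3(x',y,z)+J_3(x,y,z)x'$. Since $J_3$ is cyclically symmetric (the right-hand side of~\eqref{Jaco} is, as noted in the text), being a derivation in the first slot forces it to be one in the other two.

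The substantive point is that the Jacobiator $J_2(x,y,z):=[[x,y],z]+[[y,z],x]+[[z,x],y]$ of the $2$-bracket is a derivation in each variable --- the classical fact that the Jacobiator of a biderivation of a commutative algebra is a triderivation. I would expand $J_2(xx',y,z)$ using $[ab,c]=a[b,c]+[a,c]b$ and $[a,bc]=[a,b]c+b[a,c]$ (valid because $[-,-]$ is a $2$-brace and $Z$ is commutative). Collecting terms, the resulting summands group into $xJ_2(x',y,z)$, into $J_2(x,y,z)x'$, and into a remainder equal to $[x,y]\bigl([x',z]+[z,x']\bigr)+\bigl([x,z]+[z,x]\bigr)[x',y]$, which vanishes by skew-symmetry of $[-,-]$. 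Hence $J_2(xx',y,z)=xJ_2(x',y,z)+J_2(x,y,z)x'$, and as $J_2$ is manifestly cyclically symmetric it is a derivation in all three slots. Combining the two pieces, $J=J_2-J_3$ is a triderivation, completing the argument.

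I expect the only delicate step to be the bookkeeping in the expansion of $J_2$: one must track the ``cross'' terms (those not of the form $x(\cdots)$ or $(\cdots)x'$) and verify that they cancel in pairs, which is precisely where both the skew-symmetry of the $2$-bracket and the commutativity of $Z$ enter. The reduction to a triderivation statement and the passage from a generating set to all of $Z$ are purely formal.
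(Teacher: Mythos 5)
Your proof is correct and takes essentially the same route as the paper: both arguments show that the quasi-Jacobiator $L-R$ obeys the Leibniz rule in one slot (hence in all three by cyclic symmetry), with the cross terms cancelling in pairs via the skew-symmetry of $[-,-]$, and then extend from the generating set by the standard ``a derivation vanishing on generators vanishes'' iteration. The only difference is cosmetic --- the paper expands in the last variable, you in the first.
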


  \begin{proof}  Let $L(x,y,z)$ and  $R(x,y,z)$ be respectively the left  and the right  hand-sides of     \eqref{Jaco}.   Since    both sides  are linear in $x,y,z$ and cyclically symmetric, it suffices to verify the following: if  \eqref{Jaco} holds for  triples $x,y,z\in Z$ and $x,y,t\in Z$, then it holds for the triple $x, y, zt$.
  Since $[-,-]$ is a  brace in a commutative algebra, 
  $$ [[x,y],zt]  =  z  [[x,y],t] +  [[x,y],z] t, $$
   $$[[y,zt], x] =    [z [y,t],x] +[[y,z]t, x] $$ $$= z[  [y,t],x] +[z ,x][y,t]+ [y,z]   [t, x]  +[[y,z],x] t. $$
Similarly, 
 $$ [[zt,x], y] =  [z[t,x], y] + [[z,x]t,y]$$
 $$= z[[t,x],y]+ [z,y][t,x] +[z,x][t,y]+ [ [z,x], y]t.$$
  Adding these three  expansions    and using the skew-symmetry of $[-,-]$, we get
    $$L(x,y,zt)= z L(x,y,t)+L(x,y,z)t. $$
Thus,~$L$  satisfies the Leibnitz rule  in the last variable.
 Since the   bracket $[-,-,-]$ also satisfies this rule, so does $R(x,y,z)=[x,y,z]-[z,y,x]$.  Consequently,   if  \eqref{Jaco} holds for the triples $x,y,z $ and $x,y,t $, then it holds for the triple   $x, y, zt$.
   \end{proof}

\begin{theor}\label{Cbtheor}  For any quasi-Poisson algebra~$A$ and    integer  $n\geq 1$, there is a unique  structure of a quasi-Poisson algebra in  $A_n^t$ such that   the trace  ${\rm {tr}}:  \check A  \to A^t_n$  is a quasi-Poisson algebra homomorphism. 
\end{theor}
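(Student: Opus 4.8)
The plan is to combine Lemma~\ref{Cbtlemmmmheor} (which produces the individual brackets $\mu_n$ from braces $\mu$ in~$A$) with Lemma~\ref{Cbtlemyahhhmmmheor} (which reduces verification of the quasi-Jacobi identity to a generating set). Concretely, let $[-,-]$ and $[-,-,-]$ be the quasi-Poisson pair of braces on $\check A$. Applying Lemma~\ref{Cbtlemmmmheor} with $m=2$ to the skew-symmetric $2$-brace $[-,-]$ yields a skew-symmetric $2$-brace $[-,-]_n$ on $A^t_n$, and applying it with $m=3$ to the cyclically symmetric $3$-brace $[-,-,-]$ yields a cyclically symmetric $3$-brace $[-,-,-]_n$ on $A^t_n$; moreover both fit into the commuting diagram~\eqref{dia1}, i.e. $\tr\bigl([x,y]\bigr)=[\tr(x),\tr(y)]_n$ and $\tr\bigl([x,y,z]\bigr)=[\tr(x),\tr(y),\tr(z)]_n$ for all $x,y,z\in\check A$. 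Uniqueness of the whole structure is then immediate: $\tr(\check A)=\tr(A)$ generates $A^t_n$, and any quasi-Poisson structure for which $\tr$ is a homomorphism must have brackets satisfying exactly these compatibility relations on a generating set, hence agrees with $[-,-]_n$, $[-,-,-]_n$ by the uniqueness clauses already in Lemma~\ref{Cbtlemmmmheor}.

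The remaining substantive point is that the pair $\bigl([-,-]_n,[-,-,-]_n\bigr)$ actually satisfies the quasi-Jacobi identity~\eqref{Jaco}, i.e. forms a quasi-Lie pair on $\check{A^t_n}=A^t_n$ (recall $A^t_n$ is commutative, so $\check{A^t_n}=A^t_n$). Here I would invoke Lemma~\ref{Cbtlemyahhhmmmheor}: since $A^t_n$ is commutative and carries a skew-symmetric $2$-brace and a cyclically symmetric $3$-brace, it suffices to check~\eqref{Jaco} on a generating set, and $\tr(\check A)$ is such a set. For $x,y,z\in\check A$, the commutativity of~\eqref{dia1} lets me push everything through $\tr$: the Jacobiator of $[-,-]_n$ evaluated at $\bigl(\tr(x),\tr(y),\tr(z)\bigr)$ equals $\tr$ applied to the Jacobiator of $[-,-]$ at $(x,y,z)$, and similarly the right-hand side $[\tr(x),\tr(y),\tr(z)]_n-[\tr(z),\tr(y),\tr(x)]_n$ equals $\tr\bigl([x,y,z]-[z,y,x]\bigr)$. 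Since~\eqref{Jaco} holds in $\check A$ by hypothesis, applying $\tr$ gives the identity on the generators $\tr(x),\tr(y),\tr(z)$, and Lemma~\ref{Cbtlemyahhhmmmheor} propagates it to all of $A^t_n$.

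One detail worth spelling out carefully is the passage ``the Jacobiator of $[-,-]_n$ at $(\tr x,\tr y,\tr z)$ equals $\tr$ of the Jacobiator of $[-,-]$ at $(x,y,z)$'': this is where one uses that $[-,-]$ is not merely a bracket but a brace, so that $[x,y]\in\check A$ and one can legitimately form $[[x,y],z]$ inside $\check A$ and then apply the compatibility diagram a second time with first argument $[x,y]$. Concretely $[\tr x,\tr y]_n=\tr[x,y]$, hence $[[\tr x,\tr y]_n,\tr z]_n=[\tr[x,y],\tr z]_n=\tr[[x,y],z]$, and summing the three cyclic terms gives $\tr$ of the full Jacobiator; the right-hand side is handled identically. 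There is no real obstacle here, only bookkeeping. The main conceptual obstacle of the whole argument has already been absorbed into the two preceding lemmas — Lemma~\ref{Cbtlemmmmheor} does the work of descending braces along $\tr$, and Lemma~\ref{Cbtlemyahhhmmmheor} does the work of checking a polynomial identity on generators — so the proof of the theorem itself is essentially a formal assembly: build the brackets, check the identity on generators via the diagram, extend by the lemma, and note uniqueness.
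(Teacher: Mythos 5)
Your proposal is correct and follows essentially the same route as the paper's proof: apply Lemma~\ref{Cbtlemmmmheor} to each brace, verify the quasi-Jacobi identity on the generating set $\operatorname{tr}(\check A)$ by pushing it through the trace, and extend to all of $A^t_n$ via Lemma~\ref{Cbtlemyahhhmmmheor}. The extra bookkeeping you spell out (applying the compatibility diagram twice to handle the iterated bracket in the Jacobiator) is exactly the step the paper leaves implicit.
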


\begin{proof} 
     Let $[-,-],  [-,-,-]$ be the  given   braces  in~$\check A$ forming a quasi-Lie pair.  By Lemma~\ref{Cbtlemmmmheor}, there are unique       braces  $[-,-],  [-,-,-]$    in   $ A_n^t$  such that  $$[{\rm {tr}} (x), {\rm {tr}}(y)]={\rm {tr}} ( [x,y])\quad  {\rm {and}} \quad [{\rm {tr}} (x), {\rm {tr}}(y), {\rm {tr}} (z)]={\rm {tr}} ([x,y, z])$$ for all $x,y, z\in \check A$.   
Thus,   the quasi-Jacobi relation   \eqref{Jaco} holds  for all  elements of the  set $ {\rm {tr}} (\check A) \subset  A^t_n $ generating $ A^t_n$.   Lemma~\ref{Cbtlemyahhhmmmheor} implies that \eqref{Jaco} holds for all elements of $  A^t_n$.  Since  the brace $[-,-]$  in~$\check A$ is skew-symmetric and the brace $ [-,-,-]$  in~$\check A$ is  cyclically symmetric, so are the induced braces     in $A_n^t$. Therefore  these braces form a quasi-Poisson pair.           They turn   $A^t_n$  into a quasi-Poisson algebra satisfying the conditions  of the theorem. \end{proof}

When the quasi-Poisson algebra~$A$ in Theorem~\ref{Cbtheor} has  zero  3-bracket, the induced 3-bracket in
$A_n^t$  also is  zero. Then the   2-bracket  in $\check A$ is a Lie bracket and so is   the induced 2-bracket in $ A_n^t$. So,    $A_n^t$ is a Poisson algebra in the usual sense. This case of Theorem~\ref{Cbtheor} is   due to W. Crawley-Boevey \cite{Cb}.


 \subsection{Remark}  A smooth vector field~$v$ on  a smooth manifold~$N$ induces a derivation $d_v$  of the commutative algebra $C^\infty(N)$ of smooth $\RR$-valued functions on~$N$. By definition,  $d_v(f)=df(v)$ for  $f\in  C^\infty(N)$. The map $v \mapsto d_v$ defines  a Lie algebra isomorphism from the Lie algebra of smooth vector fields on~$N$ (with the Jacobi-Lie bracket) onto  the Lie algebra of derivations of $ C^\infty(N) $ with the Lie bracket $[d_1,d_2]=d_1 d_2-d_2 d_1$. For any  algebra~$A$ and any $n\geq 1$, this suggests to view    derivations of the trace  algebra $A^t_n$ as vector fields on (the smooth part) of the affine   scheme  ${\rm Rep}_n (A)//G$.  More generally,   we can view $m$-braces in $  A^t_n$ as  $m$-tensor fields on ${\rm Rep}_n (A)//G$ for all $ {m\geq 1}$.

\section{The main theorem}\label{A topological example2}
  
  We define braces in the modules of loops, recall quasi-surfaces from \cite{Tu3}, and state our main results.
  
  \subsection{Braces in the module  of loops}\label{Case}\label{prel1}
  A \emph{loop} in a  topological space~$X$  is a continuous map $a:S^1\to X$ where the circle $S^1=\{p\in \CC\, \vert \, \vert p \vert=1\}$ is  oriented counterclockwise. 
Two loops $a,b:S^1 \to X$ are \emph{freely homotopic} if there is a continuous map $F:S^1\times [0,1]\to X$  such that $F(p,0)=a(p)$ and $F(p,1)=b(p)$ for all $p\in S^1$.  
 The set of free homotopy classes of loops  in~$X$  is denoted by   $L(X)$. The free module with basis $L(X)$  is denoted by  $M(X)$.

For path-connected~$X$,   we define braces in   $M(X)$ as follows.  Pick a  point $\ast \in X$ and set $\pi=\pi_1(X, \ast)$. For the group algebra $A=R[\pi]$,  the module $A' \subset A $ is generated by the set $\{ uv-vu\, \vert \, u,v \in \pi\}$.  Since   $uv=u(vu)u^{-1}$ for   $u,v\in \pi$, the module $A' $ is generated by  the set $\{uw u^{-1}-w \, \vert \, u,w \in \pi\}$.  Thus,   $\check A =A/A'=R\check \pi$ is the  module  freely generated by   the set~$\check \pi$ of   conjugacy classes of elements of~$\pi$. Note that the   map $  \pi \to L(X)$ carrying  the homotopy classes of  loops to their free homotopy classes    induces  a  bijection $\check \pi = L(X)$. Thus  $\check A=R\check \pi=M(X)$.  A  bracket  in the module  $M(X) $  is   a   \emph{brace} if it is a brace in the algebra~$A$. It is straightforward to see that this definition does not depend on the choice of the base  point~$\ast$.
 

\subsection{Quasi-surfaces}\label{QSQS}    By a   \emph{segment} we   mean  a closed segment   and by a   \emph{surface} we   mean  a  smooth oriented  2-manifold  with   boundary.      A \emph{quasi-surface}  is a path-connected  topological space  obtained by gluing a surface~$\Sigma$ to  a  topological space~$Y$  along a continuous mapping from a union of several    disjoint     segments  in $\partial \Sigma$  to~$Y$.  We call~$\Sigma$ the \emph{surface core}  and~$Y$ the \emph{singular part} of~$X$. 
  For example, taking a path-connected surface~$\Gamma$   and collapsing several   disjoint   subsegments  of  $\partial \Gamma$ into a single point we obtain a quasi-surface with 1-point singular part. When only one segment in $\partial \Gamma$ is collapsed, the resulting quasi-surface is homeomorphic to~$\Gamma$. Another way to turn~$\Gamma$ into a quasi-surface is as follows:
pick a  surface $\Sigma \subset \Gamma$ meeting $ \overline {\Gamma  \setminus \Sigma}  $  at a   finite non-empty set of disjoint segments with endpoints in $\partial \Gamma$. Taking~$\Sigma$ as the   surface core and $\overline {\Gamma  \setminus \Sigma} $ as the singular part,   we turn~$\Gamma$ into a  quasi-surface.   In particular, given  a finite tree $T\subset \Gamma$ meeting $\partial \Gamma$ at the vertices of degree~$1$, we can take  a closed regular neighborhood of~$T$ in~$\Gamma$ as the surface core and take the closure of the rest of~$\Gamma$ as the singular part. In this way, $T$ gives rise to  a structure of a quasi-surface on~$\Gamma$. 

\subsection{Main results}\label{QSmainQS}    For any  quasi-surface~$X$, the author constructed in~\cite{Tu3}  a skew-symmetric 2-bracket $[-,-,]_X$ and cyclically symmetric $m$-brackets $\{\mu^m\}_{m\geq 1}$  in  the module $ M(X)$.  
We  now state our main theorem.

\begin{theor}\label{MAIN+}   The brackets $[-,-,]_X$ and  $\{\mu^m\}_{m\geq 1}$ in $ M(X)$ are braces.  
\end{theor}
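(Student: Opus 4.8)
The plan is to prove that each of the brackets $[-,-]_X$ and $\mu^m$ is a brace by exhibiting, for every choice of all but one of its arguments, a genuine derivation of the group algebra $A=R[\pi]$, where $\pi=\pi_1(X,\ast)$ with $\ast$ chosen in the interior of the surface core, whose induced endomorphism of $\check A=M(X)$ is the given one-variable operation. A derivation of $A=R[\pi]$ is the same thing as a map $D\colon\pi\to A$ satisfying the cocycle condition $D(uv)=D(u)\,v+u\,D(v)$, extended $R$-linearly; so everything reduces to writing down such a map and checking that it is well defined on homotopy classes rel $\ast$.

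First I would recall from \cite{Tu3} the explicit recipe defining $[-,-]_X$ and the $\mu^m$. Free homotopy classes are represented by \emph{generic} loops in $X$: loops that meet the surface core $\Sigma$ transversally and cross the gluing segments --- hence enter the singular part $Y$ --- only along finitely many controlled passages; and each value $\mu^m(a_1,\ldots,a_m)$ is a signed sum indexed by the intersection-and-passage data of fixed generic representatives, the summand attached to a datum being obtained by cutting the loops at the relevant point(s) and regluing the pieces by the combinatorial rule of \cite{Tu3}. The structural feature that makes the theorem work is \emph{locality}: each summand depends only on the loops near one point --- a transverse double point in $\Sigma$, or a crossing of a gluing segment --- and the product in $A$ is concatenation of based loops. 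Hence if based generic loops $\alpha,\beta$ represent $u,v\in\pi$ and their concatenation $\alpha\beta$ is again generic, the relevant points of $\alpha\beta$ form the disjoint union of those of $\alpha$ and those of $\beta$, and the cut-and-reglue operation distributes over this decomposition.

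Concretely, fix based generic representatives of conjugacy classes $y_2,\ldots,y_m\in\check\pi$, in general position with respect to each other. For each $u\in\pi$ pick a based generic representative $\alpha$ (generic also against the $y_i$ and against itself) and let $D(u)\in A$ be the signed sum, over the relevant points of $\alpha$, of the corresponding based loop obtained by cutting $\alpha$ there and regluing as in \cite{Tu3}, now recording how the basepoint is carried along so that the output is a based loop. Locality yields the cocycle identity $D(uv)=D(u)\,v+u\,D(v)$, so $D$ extends to a derivation of $A$; it carries $A'$ into $A'$ and hence descends to a weak derivation of $\check A$, which --- upon forgetting basepoints and comparing with the recipe of \cite{Tu3} --- is precisely $x\mapsto\mu^m(x,y_2,\ldots,y_m)$ (respectively $x\mapsto[x,y]_X$). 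So each operation is a weak derivation in its first variable; by the cyclic symmetry of the $\mu^m$ and the skew-symmetry of $[-,-]_X$ it is then a weak derivation in every variable, i.e.\ a brace.

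The step that requires real work --- and the one I expect to be the main obstacle --- is well-definedness: that $D(u)$ depends only on $u\in\pi$, not on the chosen generic based representative $\alpha$. Free-homotopy invariance of $\mu^m(a_1,\ldots,a_m)$ is exactly what is proved in \cite{Tu3}, by verifying invariance under the elementary moves relating generic representatives (sliding a strand through a triple point, cancelling a tangency, the moves that rearrange passages through the gluing segments and arcs in $Y$). Here one must run the same check for based loops, with the additional move of pushing the basepoint across a strand, and confirm that each move leaves $D(u)$ unchanged as an element of $A$ --- not merely its image in $\check A$. The subtle case is the singular part: for $m\ge2$ the passages through the gluing segments and the resulting arcs in $Y$ contribute summands whose behaviour under based homotopy, and under the regrouping that produces the Leibnitz rule, has to be tracked with care. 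Finally one must check the transversality point that generic based loops can be chosen so that their concatenations are again generic; granting this, the theorem follows.
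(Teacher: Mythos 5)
Your proposal is correct and follows essentially the same route as the paper: for each fixed choice of the remaining arguments you lift the one-variable operation to a based, signed cut-and-paste sum on $\pi_1(X,\ast)$, verify well-definedness under based homotopies by the same elementary moves as in the free-homotopy invariance proofs of \cite{Tu3}, obtain the Leibnitz/cocycle identity from the locality of the intersection and gate-crossing data under concatenation, and then invoke cyclic symmetry (resp.\ skew-symmetry) to handle the other variables. The only cosmetic difference is the placement of the base point (the paper takes $\ast$ in the singular part for $[-,-]_X$), which is immaterial since weak derivations of $\check A$ do not depend on this choice.
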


   By Theorem~4.2 of \cite{Tu4},  the brackets $[-,-,]_X$ and   $  \mu^3$  form a quasi-Lie pair. Combining  with Theorem~\ref{MAIN+}  we obtain the following claim.

\begin{corol}\label{Main+0-}    The  group algebra  $ R[\pi_1(X)]$ endowed with the braces  $[-,-,]_X$ and $ \mu^3$     is a quasi-Poisson algebra.
 \end{corol}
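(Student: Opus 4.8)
The plan is to verify, for $A = R[\pi_1(X)]$ equipped with the brackets $[-,-]_X$ and $\mu^3$, the two conditions in the definition of a quasi-Poisson algebra (Section~\ref{Teadsrmy}): that both brackets are braces in~$A$, and that they form a quasi-Lie pair on $\check A$. The first step is to recall the identification from Section~\ref{Case}: fixing a base point $\ast\in X$ and setting $\pi=\pi_1(X,\ast)$, one has $\check A = A/A' = R\check\pi = M(X)$, so that $[-,-]_X$ and $\mu^3$ are genuinely a $2$-bracket and a $3$-bracket in the module $\check A$; moreover, as noted there, the property of a bracket in $M(X)$ being a brace in~$A$ is independent of the choice of~$\ast$.

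The condition that both brackets are braces in~$A$ is then exactly Theorem~\ref{MAIN+}, applied to $[-,-]_X$ and to $\mu^m$ in the case $m=3$. The condition that the pair $\big([-,-]_X,\mu^3\big)$ is a quasi-Lie pair on $\check A$ splits into three verifications: skew-symmetry of $[-,-]_X$ and cyclic symmetry of $\mu^3$, both of which are built into the construction of these operations in \cite{Tu3} and recalled in Section~\ref{QSmainQS}; and the quasi-Jacobi identity \eqref{Jaco} relating $[-,-]_X$ and $\mu^3$, which is precisely the content of Theorem~4.2 of \cite{Tu4}. Assembling these facts, $\check A = M(X)$ carries a quasi-Lie pair whose two brackets are both braces in~$A$, which by definition makes $R[\pi_1(X)]$ a quasi-Poisson algebra.

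The one point requiring attention — rather than a real obstacle — is to confirm that the brackets named in Theorem~4.2 of \cite{Tu4} are literally the same maps (with the same signs and normalizations) as the braces of Theorem~\ref{MAIN+}, and that the base-point independence recorded in Section~\ref{Case} is compatible with the topological definitions of the operations $\mu^m$ and $[-,-]_X$. Once this bookkeeping is in place, the corollary follows formally from Theorem~\ref{MAIN+} together with Theorem~4.2 of \cite{Tu4}, with no additional computation.
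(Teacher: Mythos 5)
Your proposal is correct and follows exactly the paper's route: the corollary is obtained by combining Theorem~\ref{MAIN+} (both brackets are braces) with Theorem~4.2 of \cite{Tu4} (they form a quasi-Lie pair), via the identification $\check A = M(X)$ from Section~\ref{Case}. The extra bookkeeping remarks you include are sensible but not needed beyond what the paper already records.
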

 
Corollary~\ref{Main+0-}  and  Theorem~\ref{Cbtheor}  imply  the following claim.
 
 \begin{corol}\label{Main+0-+}    For the algebra $A=R[\pi_1(X)]$   and any integer $n\geq 1$,  there is a unique structure of a  quasi-Poisson algebra
    in   $A_n^t$ such that   the trace  ${\rm {tr}}:  \check A  \to A^t_n$  is a quasi-Poisson algebra homomorphism. 
 \end{corol}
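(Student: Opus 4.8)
The plan is to deduce Corollary~\ref{Main+0-+} by simply chaining together the two results it depends on, since the real mathematical content is already packaged. First I would invoke Corollary~\ref{Main+0-}: applied to the quasi-surface~$X$, it tells us that the group algebra $A = R[\pi_1(X)]$, equipped with the $2$-brace $[-,-]_X$ and the $3$-brace $\mu^3$, is a quasi-Poisson algebra in the sense of Section~\ref{Teadsrmy}. That is, these two braces form a quasi-Poisson pair: each is a brace in~$A$ (a bracket in $\check A$ that is a weak derivation in every variable), the $2$-brace is skew-symmetric, the $3$-brace is cyclically symmetric, and together they satisfy the quasi-Jacobi identity~\eqref{Jaco} on $\check A$.

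Next I would feed this quasi-Poisson algebra into Theorem~\ref{Cbtheor}. That theorem takes as input an arbitrary quasi-Poisson algebra and an integer $n\geq 1$, and produces a unique quasi-Poisson algebra structure on the trace algebra $A_n^t$ for which the trace map $\mathrm{tr}\colon \check A \to A_n^t$ is a quasi-Poisson algebra homomorphism (that is, a quasi-Lie algebra homomorphism: it intertwines both the $2$-bracket and the $3$-bracket). Applying this with $A = R[\pi_1(X)]$ carrying the structure from Corollary~\ref{Main+0-} gives exactly the asserted conclusion: existence and uniqueness of the quasi-Poisson structure on $A_n^t$ compatible with the trace. Unwinding Theorem~\ref{Cbtheor}'s proof, the induced braces are the $[-,-]_n$ and $\mu^3_n$ produced by Lemma~\ref{Cbtlemmmmheor}, which satisfy~\eqref{Jaco} on the generating set $\mathrm{tr}(\check A)$ and hence, by Lemma~\ref{Cbtlemyahhhmmmheor}, on all of $A_n^t$.

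There is essentially no obstacle here: the corollary is a formal consequence of the two cited results, and the only thing to check is that the hypotheses of Theorem~\ref{Cbtheor} are literally met — namely that the pair $([-,-]_X, \mu^3)$ on $R[\pi_1(X)]$ constitutes a quasi-Poisson algebra, which is precisely the statement of Corollary~\ref{Main+0-}. If one wished, one could remark that the higher brackets $\mu^m$ for $m \neq 2,3$ play no role in this particular corollary; only the pair $([-,-]_X, \mu^3)$ enters, via the quasi-Lie pair property supplied by Theorem~4.2 of~\cite{Tu4} together with Theorem~\ref{MAIN+}. The genuine work lies upstream, in Theorem~\ref{MAIN+} (that the topological operations are braces) and in Theorem~\ref{Cbtheor} (transporting a quasi-Poisson structure to trace algebras); this last statement merely records their combination.

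\begin{proof} By Corollary~\ref{Main+0-}, the group algebra $A = R[\pi_1(X)]$ equipped with the braces $[-,-]_X$ and $\mu^3$ is a quasi-Poisson algebra. Applying Theorem~\ref{Cbtheor} to this quasi-Poisson algebra and the given integer $n\geq 1$ yields a unique structure of a quasi-Poisson algebra on $A_n^t$ such that $\mathrm{tr}\colon \check A \to A_n^t$ is a quasi-Poisson algebra homomorphism. \end{proof}
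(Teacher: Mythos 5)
Your proposal is correct and matches the paper exactly: the paper states that Corollary~\ref{Main+0-} and Theorem~\ref{Cbtheor} together imply this claim, which is precisely the two-step chaining you carry out. Nothing further is needed.
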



In the next two  sections we  recall    the definitions of the brackets $\{\mu^m\}_{m\geq 1}, [-,-,]_X$  and   prove Theorem~\ref{MAIN+}.

 \section{Gates}\label{A topological example}

We  define  gates in  an arbitrary  topological space~$X$ and show how gates give rise to braces.

 \subsection{Gates}\label{topolsett}\label{coordinateddd2w-}   
    A \emph{cylinder neighborhood} of a   set $C \subset X$  is   a  pair consisting of a closed set $U\subset X$  with $C \subset  {\rm {Int}} (U)$ and     a homeomorphism  $ U\approx C\times [-1,1]$    carrying  ${\rm {Int}} (U)$ onto  $C\times (-1, 1)$ and carrying each point $c\in C   $ to $(c,0)  $.    A  \emph{gate} in~$X$ is  a closed path-connected  subspace~$C$   of~$X$ endowed with a cylinder neighborhood   and such that all loops in~$C$ are contractible in~$X$.  We  will identify the   cylinder neighborhood in question with $C\times [-1,1]$ via the given homeomorphism.     
    
 For a gate $C \subset X$, consider     the  map $H:X \to S^1 $  carrying  the complement of  $ C\times (-1,1) $  in~$X$ to $-1\in S^1$ and  carrying $C\times \{t\}  $ to ${\rm{exp} }(\pi i t)\in S^1$ for all $t\in [-1,1]$. 
We say that a  loop $a:S^1 \to X$ is \emph{transversal} to~$C $  if   the map $Ha: S^1 \to S^1$   is transversal to $1  \in S^1$. Then   the set $a^{-1}(C) =(Ha)^{-1}(1)$  is finite.   For each  $p\in a^{-1}(C)$,  we   define the {\emph {crossing sign}} $\varepsilon_p(a) $: if  at~$p$ the loop~$a$    goes  from $C\times (-1,0) $ to $C \times (0,1)$  then   $\varepsilon_p(a)=+1$, otherwise,  $\varepsilon_p(a)=-1$.


  \subsection{Gate brackets} \label{Gate brackets-}  We start with  notation.   For any loop $a:S^1\to X$ we call the point $a(1)\in X$   the \emph{base point} of~$a$.  For   $p\in   S^1$,  we let $a_p: S^1\to X$ be the  loop  obtained as the composition of~$a$ with the rotation $S^1\to S^1$ carrying $1\in S^1$ to~$p$.  This loop is based at $a(p)$ and is called a \emph{reparametrization} of~$a$.  
  Set  $   L={L} (X)$    and $ M= M(X) $, see Section~\ref{prel1} for notation. 
 For  any   loop~$a$ in~$X$, we let $\langle a \rangle \in  {L }  \subset  M  $ be the free homotopy class of~$a$.
  
Each    gate $C \subset  X$ determines  
 brackets $\{\mu^m_C\}_{m\geq 1}$ in the module~$M$ as follows.  Pick a  point $\star \in C$ and,  for each   $c\in C$, pick  a path $\gamma_c$   in~$C$ from~$\star$ to~$c$. Given a loop~$a$ in~$X$ with   $a(1) \in  C$ we let $  a^\gamma=\gamma_{a(1)} \,a\, \gamma_{a(1)}^{-1}$ be the loop   based at~$\star$ and obtained from~$a$ by conjugation along  the path $\gamma_{a(1)}$. Since~$C$ is a gate, the homotopy class of $  a^\gamma$     in $\pi_1(X, \star)$   does not depend  on the choice of   $\gamma_{a(1)}$.
 
 Consider  $m \geq 1$   loops $a_1,..., a_m$ in~$X$   transversal to~$C$.   For   any $i=1,...,m$ and   $p_i\in a_i^{-1}(C) \subset S^1$,  we have  the reparametrization $ (a_i)_{p_i}$ of $a_i$  based at $a_i(p_i)\in C$  and   the loop $(a_i)_{p_i}^\gamma=((a_i)_{p_i})^\gamma$ based  at~$\star$.    Set
  $$ \mu^m_C (  a_1  , \ldots ,   a_m ) =
 \sum_{p_1\in a_1^{-1}(C), \ldots, p_m\in a_m^{-1}(C)} \, \prod_{i=1}^m \varepsilon_{p_i}(a_i)\,  \langle \prod_{i=1}^m  (a_i)_{p_i}^\gamma \rangle \in M .  $$   The following claim is proved in \cite{Tu4}, Section~3.
 
 \begin{lemma}\label{strlelemucte}  $\mu^m_C (  a_1  , \ldots ,   a_m )  \in M$ depends only on the free homotopy classes of  the loops $a_1,..., a_m$ and, in particular, does not depend on the choice of the point~$\star$.
 \end{lemma}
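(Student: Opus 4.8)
The claim is that $\mu^m_C(a_1,\dots,a_m)$ depends only on the free homotopy classes of the $a_i$. The plan is to fix all but one loop and show that the bracket is invariant under a free homotopy of the remaining loop, say $a_1$; by symmetry of the construction in the $m$ arguments this suffices. So I would take a free homotopy $F\colon S^1\times[0,1]\to X$ from $a_1^0=a_1$ to $a_1^1$ and argue that the sum defining $\mu^m_C$ is unchanged. The first reduction is to arrange, after a small perturbation of $F$ rel endpoints, that each slice $a_1^s$ is transversal to $C$ for all but finitely many $s\in[0,1]$, and that at those finitely many exceptional values the failure of transversality is of the simplest kind: either a pair of intersection points in $a_1^{-1}(C)$ is created or cancelled (a ``birth/death'' of a cancelling pair with opposite crossing signs), or an intersection point slides across the base point $1\in S^1$. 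This is standard transversality/genericity for maps to $S^1$ after composing with the map $H\colon X\to S^1$ of Section~\ref{topolsett}.

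\textbf{The two moves.} For a birth/death move, two new points $p',p''\in a_1^{-1}(C)$ appear with $\varepsilon_{p'}(a_1)=-\varepsilon_{p''}(a_1)$, and as $s$ varies near the exceptional value the reparametrized conjugated loops $(a_1^s)_{p'}^\gamma$ and $(a_1^s)_{p''}^\gamma$ are homotopic rel $\star$ (they bound a thin bigon mapped into a contractible-in-$X$ region near $C$, using that loops in $C$ are contractible in $X$). Hence the two corresponding terms in the sum — with the same product of the other factors $\prod_{i\ge 2}(a_i)_{p_i}^\gamma$ inserted in the same cyclic position — carry opposite signs and equal loop classes, so they cancel; the total is unchanged. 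For the move where a point $p$ crosses $1\in S^1$: here the crossing sign $\varepsilon_p(a_1)$ is unchanged, and the reparametrization $(a_1^s)_p$ changes only by where the basepoint $a_1^s(1)$ sits relative to $p$; but the class $\langle\prod_i(a_i)_{p_i}^\gamma\rangle$ is a free homotopy class of a product of loops all based at $\star$, so it is invariant under cyclically permuting — and in particular under the change caused by sliding the parametrization of $a_1$ past $1$. I would spell out that passing $p$ through $1$ replaces $(a_1^s)_p^\gamma$ by a conjugate of itself (conjugation by the portion of $a_1$ it sweeps past), which changes the product only by an overall conjugation, hence not its free homotopy class.

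\textbf{Independence of the auxiliary data.} Once invariance under homotopy of each $a_i$ is established, independence of the point $\star$ and of the chosen paths $\gamma_c$ is an easy consequence: changing $\star$ to another point $\star'\in C$ (or changing the $\gamma_c$) replaces each $a^\gamma$ by $\delta\, a^\gamma\,\delta^{-1}$ for a fixed loop $\delta$ in $C$ based appropriately, hence replaces the product $\prod_i (a_i)_{p_i}^\gamma$ by its conjugate by $\delta$, leaving the free homotopy class — and therefore each summand, and the whole sum — unchanged. Since $C$ is path-connected this covers all choices.

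\textbf{Main obstacle.} The routine part is the transversality/genericity setup; the delicate point is the bigon/cancellation argument in the birth-death move — one must check that the bigon swept out really does lie in a neighborhood in which the relevant loops are homotopic rel $\star$, which uses crucially the defining property of a gate that all loops in $C$ are contractible in $X$ (so the conjugating paths $\gamma_c$ are ``coherent'' up to homotopy), together with care that the other factors $(a_i)_{p_i}^\gamma$ occupy the same slot in the (cyclically ordered) product so that the two cancelling terms are genuinely negatives of each other. I expect this to be the step requiring the most attention; everything else is bookkeeping with conjugacy classes in $\pi_1(X,\star)$.
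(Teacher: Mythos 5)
Your proposal is correct and takes essentially the approach the paper relies on: the lemma itself is deferred to \cite{Tu4}, Section~3, but the same two moves (generic deformation preserving transversality, plus birth/death of a cancelling pair of gate crossings whose cancellation rests on the contractibility in~$X$ of loops in~$C$) are exactly the ones the paper replays in its proof of Lemma~\ref{strlelemuctess}, and your treatment of the independence of $\star$ and of the paths $\gamma_c$ is the standard one. One minor remark: your second move (a crossing point sliding past $1\in S^1$) is vacuous for the free-loop bracket, since $(a_1)_p$ is by definition the composition of $a_1$ with the rotation carrying $1$ to $p$ and is unaffected by the position of $1$ relative to $p$ — no conjugation actually occurs — but as your conclusion for that move is invariance anyway, this does not affect the argument.
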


Using the product structure in   the cylinder neighborhood   of~$C$,     we easily observe that  each loop in~$X$ is freely homotopic to  a loop   transversal to~$C$. Therefore Lemma~\ref{strlelemucte}  yields a map $$L^m \to M, \,\, (\langle a_1 \rangle,..., \langle a_m \rangle)\mapsto  \mu^m_C (  a_1  , \ldots ,   a_m ) .$$ This map extends by linearity to an  $m$-bracket $\mu^m_C$  in~$  M$. Since $\langle uv \rangle =\langle vu \rangle$ for any loops $u,v $ in~$X$ based at~$\star$,  the bracket $\mu^m_C$  is cyclically symmetric.

  \begin{lemma}\label{strlelemuctess}  The bracket $\mu^m_C   $ in $M $ is a brace.  \end{lemma}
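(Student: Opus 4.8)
The plan is to show that for each fixed index $j$ and fixed loops $a_1,\dots,a_{j-1},a_{j+1},\dots,a_m$ transversal to~$C$, the linear endomorphism of~$M$ sending $\langle a_j\rangle$ to $\mu^m_C(a_1,\dots,a_m)$ is a weak derivation, i.e.\ is induced by a genuine derivation of the group algebra $A=R[\pi]$ with $\pi=\pi_1(X,\ast)$. By the cyclic symmetry of $\mu^m_C$ (already noted in the excerpt) it suffices to treat the last variable $j=m$. So the task reduces to producing, for fixed transversal loops $b_1,\dots,b_{m-1}$, a derivation $d$ of $A$ whose induced map $\check A\to\check A$ sends the free homotopy class $\langle a\rangle$ of a transversal loop~$a$ to $\mu^m_C(b_1,\dots,b_{m-1},a)$.

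The key construction: work with loops based at the chosen point $\star\in C$. Given $u\in\pi=\pi_1(X,\star)$ represented by a loop $a$ transversal to~$C$, each crossing point $p\in a^{-1}(C)$ splits $a$, up to homotopy, as a product in $\pi$ — writing $a$ as it traverses $S^1$ starting from the basepoint, cutting at $p$ and conjugating the two arcs by $\gamma_{a(1)}$ and $\gamma_{a(p)}$ appropriately expresses $a = \alpha_p \beta_p$ where $\beta_p\alpha_p = (a_p)^\gamma$ is the reparametrized based loop at that crossing. Then set
\[
d(u) \;=\; \sum_{p\in a^{-1}(C)} \varepsilon_p(a)\,\alpha_p\,\Big(\textstyle\prod_{i=1}^{m-1} (b_i)_{q_i}^\gamma\Big)\,\beta_p
\]
where the inner sum is also over the crossings $q_i\in b_i^{-1}(C)$ with the sign $\prod\varepsilon_{q_i}(b_i)$ absorbed (I suppress it for readability). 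One checks this is well defined on $\pi$ (independent of the transversal representative and of the cutting, using that $C$ is a gate so based loops in $C$ are null-homotopic, exactly as in Lemma~\ref{strlelemucte}), extends $R$-linearly to $A$, and is a derivation: for a product $uv$ of based transversal loops, the crossings of the concatenation are the crossings of $u$ together with those of $v$, and the insertion at a crossing coming from $u$ produces $\big(\cdots\big)\,v$ while one from $v$ produces $u\,\big(\cdots\big)$, which is precisely the Leibniz rule $d(uv)=d(u)v+u\,d(v)$. Finally, passing to $\check A=R\check\pi=M$, the image of $\langle a\rangle$ under the induced weak derivation is $\sum_p \varepsilon_p(a)\langle \beta_p(\prod (b_i)^\gamma_{q_i})\alpha_p\rangle = \sum_p \varepsilon_p(a)\langle (\prod(b_i)^\gamma_{q_i})(a_p)^\gamma\rangle$, which by the cyclic invariance of $\langle-\rangle$ and the definition of $\mu^m_C$ equals $\mu^m_C(b_1,\dots,b_{m-1},a)$, as required.

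The main obstacle is the well-definedness of the derivation $d$ on the group $\pi$ rather than just on the set of based transversal loops: one must verify that $d(u)$ depends only on the homotopy class of~$u$ (not the transversal representative nor the choice of splitting at each crossing), and that the Leibniz identity $d(uv)=d(u)v+ud(v)$ is compatible with the relations in $\pi$ — in particular that homotoping a loop across~$C$ creates or cancels crossings in canceling pairs of opposite sign, so that $d$ is unchanged. This is the usual transversality/general-position bookkeeping underlying Lemma~\ref{strlelemucte}, and the gate hypothesis (all loops in $C$ contractible in $X$) is exactly what makes the conjugation along paths $\gamma_c$ inside~$C$ canonical. Once $d$ is shown to be a well-defined derivation of~$A$, everything else is immediate: the induced map on $\check A$ is a weak derivation by definition, it computes $\mu^m_C$ in the last variable by the calculation above, and cyclic symmetry upgrades this to weak-derivation-ness in every variable, so $\mu^m_C$ is a brace.
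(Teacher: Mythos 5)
Your proof is correct and follows essentially the same route as the paper: lift $\mu^m_C$ in one variable to a map $\pi\to A$ by inserting the (conjugated) product of the other loops at each crossing of the varying loop with~$C$, then check homotopy invariance (new crossings appear in cancelling pairs of opposite sign) and the Leibniz rule (the crossings of a concatenation are the disjoint union of the crossings of the factors). The one caveat is your choice of base point $\star\in C$: the paper deliberately bases at a point $\ast\in X\setminus C$ so that based representatives can be made transversal to~$C$ and concatenation does not produce a degenerate crossing at the gate, and you should do the same (harmless, since being a brace is independent of the base point).
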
 
 
 \begin{proof} Pick a point $\ast  \in X \setminus C$ and  consider the group algebra   
   $A=R[\pi_1(X, \ast)]$.  We need to prove that the bracket $\mu^m_C   $ in $M=\check A$ is a brace in~$A$. Since  $\mu^m_C   $ is  cyclically symmetric, it suffices to show that it is a weak derivation in the first variable.  To this end,  we refine the definition of $\mu^m_C(a_1,..., a_m)$  whenever the loop $a_1$ is   based at~$\ast$.     For any points $p_1\in a_1^{-1}(C), \ldots, p_m\in a_m^{-1}(C)$   consider the  product path 
\begin{equation}\label{tvi} a^{-}_{p_1 } \, \gamma_{a_1(p_1)}^{-1} \, \big(\prod_{i=2}^m  (a_i)_{p_i}^\gamma\big ) \, \gamma_{a_1(p_1) } \, a^+_{p_1 } \end{equation}  where  $a^-_{p_1 }$ is the path in~$X$ obtained as the restriction of~$a_1$ to the arc in $S^1$ leading from $1\in S^1$ to $p_1$ and  $a^+_{p_1 }$ is the path in~$X$ obtained as the restriction of~$a_1$ to the  arc in $S^1$ leading from $p_1$ to $1$. The path \eqref{tvi}
is a loop based at~$\ast$.  Let $ [p_1,..., p_m] \in \pi_1(X, \ast)$  be the homotopy class of this loop.  Set
 \begin{equation}\label{34d} \widetilde \mu^m  (  a_1  , \ldots ,   a_m ) = \sum_{p_1\in a_1^{-1}(C), \ldots, p_m\in a_m^{-1}(C)} \, \prod_{i=1}^m \varepsilon_{p_i}(a_i)\,  [p_1,..., p_m] \in A . \end{equation}  
 Note that any  two $\ast$-based  loops in~$ X$ transversal to~$C$  and homotopic in the class of $\ast$-based loops  may be related by a finite sequence of  homotopies  of the following two  types  (and   inverse homotopies):  (i)  deformations   in the class of $\ast$-based loops transversal to~$C$ and (ii) deformations  pushing a branch of the loop     in $X \setminus C$   across~$C$ and creating   two new crossings   with~$C$. It is clear that homotopies of   $a_1 $ of   type (i) preserve $\widetilde \mu^m (  a_1  , \ldots ,   a_m )$. A homotopy of $a_1$ of  type  (ii)     creates two   new points $p, p'$ in $a^{-1}(C)$ such that  $\varepsilon_{p}(a_1)=-\varepsilon_{p'}(a_1)$ and  $[p, p_2,..., p_m]=[p', p_2,..., p_m]$ for any    $\{p_i\in a_i^{-1}(C)\}^{m}_{i=2}$. Therefore the expression   $ \widetilde \mu^m  (  a_1  , \ldots ,   a_m )$ is preserved under deformations of  $a_1$ in the class of $\ast$-based loops.   So, the formula 
 $a_1 \mapsto  \widetilde \mu^m  (  a_1  , \ldots ,   a_m )$ defines a  mapping $\pi_1(X, \ast) \to A$. This mapping  extends by linearity to a linear map $A\to A$ which is easily seen  to be   a derivation and a lift of the linear map $\check A \to \check A, \langle a_1 \rangle \mapsto \mu^m_C(a_1,..., a_m)$. 
 Thus,  $\mu^m_C   $ is a   weak derivation in the first variable. This proves the lemma. 
 \end{proof} 
 
 \subsection{Remark} The  arguments  in the proof of Lemma~\ref{strlelemuctess}  may be used to show that the sum \eqref{34d} is preserved under free homotopies of the loops $a_2,..., a_m$. Therefore~\eqref{34d} defines a  map $A \times M^{m-1}\to A$   which is a derivation in the first variable and is linear in the other $m-1$ variables.

\section{Quasi-surfaces and proof of Theorem~\ref{MAIN+}}\label{Quasi-surfaces and braces}


\subsection{More on quasi-surfaces}\label{Definitions and notation}     
Consider a quasi-surface~$X$ with surface core~$\Sigma$ and singular part~$Y$. We suppose that~$X$ is obtained by gluing~$\Sigma$ to~$Y$  along a continuous map   $   \alpha  \to Y$ where $ \alpha \subset \partial \Sigma $ is a    union of a finite number ($\geq 1$) of    disjoint  closed segments  in $\partial \Sigma$. 
Note that     $Y\subset X$ and   $X\setminus Y =\Sigma\setminus \alpha$.    
We   fix a closed    neighborhood  of~$\alpha$ in~$\Sigma$ and 
  identify   it  with  $\alpha \times [-1,1]$ so that 
  $$\alpha=\alpha \times \{-1\} \quad  \quad {\text {and}} \quad \quad  \partial \Sigma  \cap (\alpha \times [-1,1])=\alpha   \cup (\partial \alpha \times [-1,1]).$$
  The  surface
$$\Sigma'=\Sigma \setminus  (\alpha \times [-1,0)) \subset \Sigma \setminus \alpha \subset X   $$  is  a  copy of $\Sigma$    embedded in~$X$ and disjoint from~$Y$. 
We    provide~$\Sigma'$  with the   orientation induced  from that of~$\Sigma$. 

Set  $\pi_0=\pi_0(\alpha)$.   For    $k\in  \pi_0 $,   we   let   $ \alpha_k $ be  the corresponding   segment component of $\alpha \times \{0\} \subset \partial \Sigma' \subset X$. 
 We call the segments  $\{\alpha_k\}_{k\in \pi_0}$ the \emph{gates} of~$X$. It is clear that these segments are gates of~$X$ in the sense of Section~\ref{coordinateddd2w-}.
They       separate  $
\Sigma'\subset X$ from the rest of~$X$.   By Section~\ref{Gate brackets-}, every  gate  $\alpha_k$ determines a sequence of brackets $\{\mu^m_{\alpha_k}\}_{m\geq 1}$ in the module $M=M(X)$. For each $m\geq 1$, we define the \emph{total gate $m$-bracket} of~$X$ to be the sum
$$\mu^m=\sum_{k\in \pi_0} \mu^m_{\alpha_k} :M^m \to M.$$

 We keep the objects  $X, \Sigma, \Sigma', \alpha, \pi_0$ till the end of Section~\ref{Quasi-surfaces and braces}. 
 
\subsection{Loops in~$X$}\label{Generic loops}   For any loop $a: S^1 \to X$ and a gate $\alpha_k$,   we set $a\cap \alpha_k=a(S^1) \cap \alpha_k $.
We  say that an (ordered)  pair of loops $a,b $ in~$X$ is \emph{admissible} if these loops do not meet at the gates and 
for any gate $\alpha_k$ of~$X$  and any points $p\in a\cap \alpha_k$, $q\in b\cap \alpha_k$,   the pair   (a   vector tangent to $\alpha_k$ and  directed from~$p$ to~$q$, a vector at~$p$ directed inside $\Sigma'$) is positively oriented in~$\Sigma'$.

 We say that a loop $a: S^1 \to X$ is    \emph{generic} if  (i)  all  branches of~$a$   in~$\Sigma'$   are smooth immersions meeting $\partial \Sigma'$ transversely   at a finite set of points which all lie  in the  interior  of the gates, and (ii)  all self-intersections of~$a$ in $\Sigma'$ are double transversal intersections   in $\Int(\Sigma')=\Sigma' \setminus \partial \Sigma'$.     A generic loop~$a$     traverses  any    point of a gate   $  \alpha_k$   at most once so that the restriction of the map $a:S^1\to X$ to $a^{-1}(\alpha_k) \subset S^1$ is a bijection onto the set $ a \cap \alpha_k $. In  this context, we adjust   notation of Section~\ref{A topological example} and   use the letter~$p$ for   elements of the set $a  \cap \alpha_k$ rather than for their preimages under~$a$.
Accordingly, the crossing sign $\varepsilon_p(a)$ at $p\in a \cap \alpha_k$      is  $+1$ if~$a$ goes at~$p$  from $X\setminus \Sigma'$  to $\Int(\Sigma')$   and is   $-1$ otherwise. 

A pair of   loops in~$X$    is    \emph{generic}  if    both loops  are generic and all  their intersections   in~$\Sigma'$ are double transversal intersections    in $\Int(\Sigma')$. In particular,   such loops  do not meet at the gates. 
It is easy to see that    each ordered pair of loops in~$X$  is freely homotopic to  an admissible generic pair of loops.

\subsection{The   bracket $[-,-]_X$}\label{The form bullet} 
 We   recall    the     2-bracket $[-,-]_X$  in  the module $M=M(X)$, see~\cite{Tu3}.   For a loop $a: S^1 \to X$ and a  point $r \in X$    traversed by~$a$ exactly once,
 we let $a_r$ be the   loop  which starts at~$r$ and goes along~$a$ until coming back to~$r$. 
  For   any  loops $a,b$ in~$X$ set    $$a\cap b= a(S^1) \cap b(S^1)\cap \Sigma'   .$$  If  $a,b$ is a generic  pair  of loops  then the set $a\cap b \subset   \Int(\Sigma')$ is finite and  each point $r \in a\cap b$ is traversed by~$a$ and~$b$ exactly once so that we can consider the loops $a_r, b_r$ based at~$r$,  their product $a_rb_r$, and the  free homotopy class $\langle a_rb_r \rangle\in L(X)$.    Set $\varepsilon_r  (a,b) =  1$ if  the tangent vectors of~$a$ and~$b$ at~$r$ form a    positive basis in the tangent space of~$\Sigma'$ at~$r$ and   set $\varepsilon_r   (a,b)=-1$ otherwise. Also, a  generic pair of  loops $a,b$ determines a  finite  set of triples   
$$T(a,b)=\{(k,p,q) \, \vert \, k\in \pi_0, p\in a\cap \alpha_k, q\in b\cap \alpha_k \}.$$
     For any triple  $(k,p,q)\in T(a,b)$, we   can   multiply the loops $a_p, b_q$  based at $p,q$ using an arbitrary  path   connecting  $p,q$ in $\alpha_k$. The product  loop determines a well-defined  element of $L (X)$   denoted $\langle a_p b_q \rangle$.

     \begin{lemma}\label{1aeee++} There is a unique 2-bracket $[-,-]_{X}$ in $M=M(X)$ such that for any admissible generic pair of loops $a,b$ in~$X$, we have 
\begin{equation}\label{chorieee++}
[\langle a \rangle, \langle b \rangle]_{X} =  2\sum_{r\in a\cap b} \varepsilon_r  (a,b) \langle a_r b_r \rangle -\sum_{(k,p,q)\in T (a,b) }\,     \varepsilon_p(a)  \,\varepsilon_q(b) \langle a_pb_q \rangle. \end{equation}
\end{lemma}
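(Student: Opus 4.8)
The plan is to establish two things: first, that the right-hand side of \eqref{chorieee++} depends only on the free homotopy classes $\langle a\rangle, \langle b\rangle$ among admissible generic pairs; and second, that this invariant extends bilinearly from $L(X)$ to a genuine $2$-bracket on $M(X)$. For the first step I would use the standard fact (already invoked in Section~\ref{Generic loops}) that any two admissible generic pairs representing the same free homotopy classes are connected by a finite sequence of elementary moves: generic homotopies through admissible generic pairs, and the non-generic transitions in which the combinatorial picture changes — a branch of $a$ (or $b$) crossing itself or the other loop in $\Int(\Sigma')$ (a double point being created or destroyed, or two double points exchanged at a triple point), a branch pushed across a gate $\alpha_k$ creating a cancelling pair of gate intersections, and moves that alter the cyclic order of intersection points along $\alpha_k$ while preserving admissibility. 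For each such move I would check that the alternating sum on the right of \eqref{chorieee++} is unchanged: the two new terms created in a gate-crossing move carry opposite signs $\varepsilon_p(a)=-\varepsilon_{p'}(a)$ and equal homotopy classes $\langle a_pb_q\rangle = \langle a_{p'}b_q\rangle$ (since the relevant path in $\alpha_k$ can be slid past the crossing), exactly as in the proof of Lemma~\ref{strlelemuctess}; a Reidemeister-II-type move in $\Int(\Sigma')$ creates two points of $a\cap b$ with opposite $\varepsilon_r(a,b)$ and equal $\langle a_rb_r\rangle$; and a triple-point move permutes summands. The admissibility hypothesis is what pins down, for each gate intersection, which side of $\alpha_k$ we are on and hence makes $\langle a_pb_q\rangle$ and the local sign conventions consistent across the homotopy.

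Granting invariance, I get a well-defined map $L(X)\times L(X)\to M(X)$, since by Section~\ref{Generic loops} every ordered pair of loops is freely homotopic to an admissible generic pair. Extending by bilinearity over the free basis $L(X)$ of $M(X)$ produces a $2$-bracket $[-,-]_X$ on $M(X)$, and uniqueness is immediate because $L(X)$ generates $M(X)$ and any two $2$-brackets agreeing on admissible generic representatives of basis elements agree everywhere. This is the statement of the lemma.

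I expect the main obstacle to be the bookkeeping in the invariance argument, specifically keeping the two distinct families of contributions — the interior intersections $a\cap b$ weighted by $\varepsilon_r(a,b)$ with loops multiplied at $r$, and the gate triples $T(a,b)$ weighted by $\varepsilon_p(a)\varepsilon_q(b)$ with loops multiplied along a path in $\alpha_k$ — separately balanced under each move, while tracking how a homotopy of one loop can simultaneously affect both families (e.g.\ a branch of $a$ swept across a gate can move points of $a\cap b$ in and out of $\Sigma'$). The factor $2$ on the interior term and the precise sign conventions are presumably dictated by exactly this compatibility, so the delicate point is to verify that no move produces a leftover discrepancy between the two families. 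Since the paper cites \cite{Tu3} for the bracket $[-,-]_X$ itself, I anticipate that the bulk of this verification is quoted from there, and here one need only record the formula \eqref{chorieee++} in the form convenient for the subsequent proof that $[-,-]_X$ is a brace.
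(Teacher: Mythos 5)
Your proposal is correct and matches the paper's treatment: the paper's entire proof consists of noting that uniqueness follows from the fact that every ordered pair of loops is freely homotopic to an admissible generic pair (so the formula determines the bracket on the basis $L(X)\times L(X)$), and citing \cite{Tu3}, Section~4.4 for the existence, i.e.\ for the homotopy-invariance of the right-hand side of \eqref{chorieee++}. Your sketch of that invariance argument (cancelling pairs at gate crossings, Reidemeister-type moves in $\Int(\Sigma')$, and the role of admissibility in fixing signs along $\alpha_k$) is exactly the content being outsourced to \cite{Tu3}, as you correctly anticipated.
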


 The uniqueness of such a bracket  follows from the last observation in Section~\ref{Generic loops}.
 The existence   follows from \cite{Tu3}, Section~4.4.  
 
The bracket $[-,-]_{X}$    generalizes Goldman's  bracket \cite{Go1}, \cite{Go2}: 
its value    on any pair of  free homotopy  classes  of loops in $\Sigma'\subset X$    is   twice  their  Goldman     bracket. 
  
\subsection{Proof of Theorem~\ref{MAIN+}}\label{yahoo}   That  the total gate brackets $\{\mu^m\}_{m\geq 1}$ in $M= M(X)$ are braces   follows from Lemma~\ref{strlelemuctess} and the obvious fact that a  sum of   braces  is a brace.   We need  to prove that the 2-bracket $[-,-]_{X}$  is a brace. 
 Since this bracket is skew-symmetric, it suffices to prove that for any $y\in L=L(X)$ the linear map $x\mapsto [x,y]_X:M \to M$ is a weak derivation. 
 
 Pick an arbitrary  base point $\ast \in Y$; set  $\pi=\pi_1(X, \ast)$ and $A=R[\pi]$. For a loop~$a$ based at~$\ast$, we   let $[a] \in \pi \subset A$ be the homotopy class of~$a$. We  construct a derivation $d:A\to A$ as follows. 
  First, we represent~$y$ by a generic loop~$b$ in~$X$. 
Since~$b $ meets the gates only  in their interior points, every  element of  the group  $\pi=\pi_1(X, \ast)$ can be represented by a   loop~$a$ based at~$\ast$  such that the pair   $a,b$ is generic and admissible.       For any  point $r\in a\cap b$ we define a loop $a\circ_r b $: it  starts at~$\ast$ and goes along~$a$ till~$r$, then it goes along the  whole  loop~$b_r$   back to~$r$, then it continues along the remaining part of~$a$   to~$\ast$. For any triple  $(k,p,q)\in T(a,b)$, we define a loop $a\circ_{p,q} b $:  it  starts at~$\ast$ and goes along~$a$ till~$p$, then it goes along $\alpha_k$ to~$q$, then it goes along the whole loop~$b_q$   back to~$q$, then it follows along $\alpha_k$ back  to~$p$, then it continues along the remaining part of~$a$   to~$\ast$. Set
\begin{equation}\label{chori--eee++}
d([a]) =  2\sum_{r\in a\cap b} \varepsilon_r  (a,b) [a\circ_r b ]  -\sum_{(k,p,q)\in T (a,b) }\,     \varepsilon_p(a)  \,\varepsilon_q(b) [a\circ_{p,q} b  ] \in A . \end{equation}
We claim that (i) $d([a]) \in A$ depends only on $[a]\in \pi$ and does not depend on the choice of the loop~$a$ in its homotopy class;
(ii) the linear extension $A\to A$ of~$d$ is a derivation; (iii) this derivation is a lift of the linear map  $x\mapsto [x,y]_X:M \to M$. The proof of (i) uses the same arguments as the  proof of \cite{Tu3}, Lemma 3.1. 
The key observation is that for any homotopic  loops $a_0, a_1$ such that the pairs $a_0,b$ and $a_1, b$ are admissible, there is a homotopy $(a_t)_{t \in [0,1]}$ of~$a_0$ into~$a_1$ such that the pair $a_t,b$ is admissible for all~$t $.  The proof of (ii) is straightforward: if $a,a'$ are  loops based at~$\ast$ and such that both  pairs   $a,b$ and $a', b$ are admissible and generic, then  the pair $aa', b$   is admissible. Deforming slightly   $a'$ we can  enurse that this pair is  generic. Then the set $(aa')\cap b$ is a disjoint union of the sets $a\cap b$ and $a'\cap b$. Similarly, the set $T(aa', b)$ 
is a disjoint union of the sets  $T(a,b)$ and $T(a', b)$. Using these facts and computing $d ([aa'])$ via~\eqref{chori--eee++} we get $d ([aa'])= d ([a] ) [a'] + [a] d ([a'])$. Finally, Claim (iii) is obtained by direct comparison of Formulas~\eqref{chorieee++} and~\eqref{chori--eee++}.

\subsection{Remark}   Similar arguments  show that the expression~\eqref{chori--eee++} is preserved under homotopies of the loop~$b$. As a result, we obtain  a well-defined bilinear form $A \times M(X)\to A$.   In the case of surfaces, this form was first constructed by  N. Kawazumi and Y. Kuno \cite{KK1}, \cite{KK2}  who proved that it defines a right action   of the Goldman-Lie algebra  of loops  on  the group algebra of the fundamental group. 

\appendix

\section{Fox derivatives}\label{Derivative brackets in group algebras}

 We  define  Fox derivatives in group algebras and explain how they give rise to  braces and how they arise from    gates in topological spaces.

  \subsection{Fox derivatives}\label{Constructions of brackets}  Let~$\pi$  be a group.  A (left)  \emph{Fox derivative} in the group algebra  $A=R[\pi]$  is a linear map $\partial:A\to A$ such that $\partial (xy)=\partial (x) +x \partial (y)$ for all $x,y \in \pi \subset A$.
For arbitrary $x,y \in A$, we have then $\partial (xy)=\partial (x) \, {\rm {aug}} (y)+x \partial (y)$ where 
  ${\rm {aug}}:A\to R$ is the linear map carrying all elements of~$\pi$ to~$1$.
 For   $x\in \pi$,  we can uniquely expand $\partial (x)= \sum_{a\in \pi}    (x/a)_\partial \, a$ where $(x/a)_\partial\in R$ is non-zero only  for a finite set of~$a$. We define a  linear map $\Delta_{\partial}:A\to A$ by 
   $$ \Delta_{\partial}(x)=  \sum_{a\in \pi}   (x/a)_\partial \,  a^{-1} x a \quad {\text {for all}} \quad x\in \pi.  $$
 
  \begin{lemma}\label{sDDDtrbucte}    
   $\Delta_\partial (A')=0$.
   \end{lemma}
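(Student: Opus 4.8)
The plan is to show that $\Delta_\partial(x)-\Delta_\partial(y)$ lies in $A'$ whenever $x,y\in\pi$ differ by a product relation, and more directly to verify that the defining expression behaves well under the natural "twisted Leibniz" rule. Concretely, I would first record the behavior of $\Delta_\partial$ on a product $xy$ with $x,y\in\pi$. Writing $\partial(x)=\sum_a (x/a)_\partial\,a$ and $\partial(y)=\sum_b (y/b)_\partial\,b$, the Fox rule $\partial(xy)=\partial(x)+x\partial(y)$ gives $(xy/c)_\partial=(x/c)_\partial+\sum_{b:\,xb=c}(y/b)_\partial$, i.e. $(xy/c)_\partial=(x/c)_\partial+(y/x^{-1}c)_\partial$. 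Plugging this into the definition,
\[
\Delta_\partial(xy)=\sum_c\big((x/c)_\partial+(y/x^{-1}c)_\partial\big)\,c^{-1}xy\,c
=\sum_a (x/a)_\partial\,a^{-1}xy\,a+\sum_b (y/b)_\partial\,(xb)^{-1}xy\,(xb),
\]
and the second sum is $\sum_b (y/b)_\partial\,b^{-1}y\,b\cdot$(conjugate)$=\sum_b (y/b)_\partial\,b^{-1}(yx)b$ after simplifying $(xb)^{-1}xy(xb)=b^{-1}ybx\cdot x^{-1}\!\cdot x = b^{-1}(yx)b$ — wait, one must be careful here, so I would track the conjugations explicitly: $(xb)^{-1}(xy)(xb)=b^{-1}x^{-1}xy\,xb=b^{-1}y\,xb$, which equals $b^{-1}(yx)b$ only up to the discrepancy between $yxb$ and $yxb$; in fact $b^{-1}yxb$ is exactly $b^{-1}(yx)b$, so the second sum is precisely $\Delta_\partial(yx)$ computed from $\partial(yx)$'s $y$-part... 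I would clean this up into the clean identity
\[
\Delta_\partial(xy)\equiv \Delta_\partial(x)+\Delta_\partial(y)\pmod{A'},
\]
using that $a^{-1}xy\,a\equiv xy\,a\cdot a^{-1}=xy$... no — the right statement is that conjugate elements are equal in $\check A=A/A'$, so $a^{-1}xya\equiv xy$, $a^{-1}xa\equiv x$, $b^{-1}yb\equiv y$ modulo $A'$; hence in $\check A$ one gets $\overline{\Delta_\partial(xy)}=\aug(y)\,\overline{x}+\aug(x)\,\overline{y}$ after collapsing conjugacy, but since $x,y\in\pi$ have $\aug=1$ this is $\overline x+\overline y$.

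The cleaner route, which I would actually write, is this: define $\delta:\pi\to\check A$ by $\delta(x)=\overline{\Delta_\partial(x)}$ (image in $\check A$). The computation above shows $\delta(xy)=\delta(x)+\delta(y)$ for all $x,y\in\pi$, so $\delta$ is a homomorphism from $\pi$ to the additive group $\check A$, hence factors through $\pi^{\mathrm{ab}}$; in particular $\delta(uvu^{-1})=\delta(v)$ for all $u,v\in\pi$, so $\delta$ kills all elements $uvu^{-1}v^{-1}$, i.e. $\delta(uv)=\delta(vu)$. Extending linearly, $\Delta_\partial$ composed with the projection $A\to\check A$ kills every $uv-vu$ with $u,v\in\pi$, which is exactly a spanning set of $A'$. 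Therefore $\Delta_\partial(A')\subseteq A'$ — but we want the stronger $\Delta_\partial(A')=0$, not merely landing in $A'$. So I need to go back and check that the computation gives the honest equality $\Delta_\partial(xy)=\Delta_\partial(yx)$ in $A$ itself, not just mod $A'$.

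For that I revisit the two-sum expansion. From $\partial(xy)=\partial(x)+x\partial(y)$ we get $\Delta_\partial(xy)=\sum_a(x/a)_\partial\,a^{-1}(xy)a+\sum_b(y/b)_\partial\,(xb)^{-1}(xy)(xb)=\sum_a(x/a)_\partial\,a^{-1}(xy)a+\sum_b(y/b)_\partial\,b^{-1}(yx)b$. Symmetrically, from $\partial(yx)=\partial(y)+y\partial(x)$, $\Delta_\partial(yx)=\sum_b(y/b)_\partial\,b^{-1}(yx)b+\sum_a(x/a)_\partial\,a^{-1}(xy)a$, the same two sums. Hence $\Delta_\partial(xy)=\Delta_\partial(yx)$ as elements of $A$, for all $x,y\in\pi$. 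Since $A'$ is spanned by $\{uv-vu : u,v\in\pi\}$ (as noted in Section~\ref{prel1}, or directly: $A'$ is spanned by commutators $[a,b]$ with $a,b\in A$, which reduce to group-element commutators by bilinearity), linearity of $\Delta_\partial$ gives $\Delta_\partial(A')=0$.

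I expect the only genuine obstacle to be bookkeeping the conjugating group elements correctly in the reindexing $a\mapsto xb$, i.e. verifying that $(xb)^{-1}(xy)(xb)=b^{-1}(yx)b$ and that the coefficient index matches $(y/b)_\partial$ under the substitution $(xy/c)_\partial = (x/c)_\partial+(y/x^{-1}c)_\partial$. Everything else is formal. Once the two expansions of $\Delta_\partial(xy)$ and $\Delta_\partial(yx)$ are seen to consist of the identical pair of sums, the conclusion is immediate.
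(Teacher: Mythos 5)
Your final argument is correct and is essentially the paper's own proof: expand $\Delta_\partial(xy)$ via the Fox rule $\partial(xy)=\partial(x)+x\partial(y)$, simplify $(xa)^{-1}(xy)(xa)=a^{-1}(yx)a$, observe the resulting expression is symmetric in $x\leftrightarrow y$, and conclude $\Delta_\partial(xy)=\Delta_\partial(yx)$ in $A$ itself, which kills the spanning set $\{uv-vu:u,v\in\pi\}$ of $A'$. You correctly recognized that working only modulo $A'$ would be insufficient; the honest equality in $A$ is the needed (and true) statement.
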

  
\begin{proof}   
  It suffices to prove that $\Delta_\partial (xy  -yx)=0$ for any $x,y\in \pi$.
   We have  $$\partial (xy)=\partial (x)  +x \partial (y)= \sum_{a\in \pi}  \big (  (x/a)_\partial \, a+   (y/a)_\partial \, xa\big ).$$
   Therefore, by the definition of $\Delta_\partial$,  
  $$\Delta_\partial (xy  )= 
  \sum_{a\in \pi} \big ( (x/a)_\partial \,  a^{-1} xy a +    (y/a)_\partial \,  (xa)^{-1} xy (xa) \big )$$
  $$ =\sum_{a\in \pi} \big ( (x/a)_\partial \,  a^{-1} xy a +    (y/a)_\partial \,  a^{-1} y xa \big ).$$
  The  latter  expression is invariant under the permutation $x \leftrightarrow y$. So,  $\Delta_\partial (xy  )=\Delta_\partial (yx  )$ and   $\Delta_\partial (xy  -yx)=0$.
  \end{proof}

  The linear map $\check A=A/A'\to A$ induced by $\Delta_\partial:A\to A$ is denoted by $\check \Delta_\partial$.

 \begin{theor}\label{sDDjkgfjfgjDtrbucte} Let $p:A\to \check A$ be the projection.    For any $m\geq 1$
  Fox derivatives $\partial_1,  ..., \partial_m$ in~$A$,  the  map  $\mu^m:\check A^{  m} \to \check A$ defined by  
\begin{equation}\label{cccdmm} \mu^m(x_1 ,     \ldots ,  x_m)=p \big  ( \check \Delta_{\partial_1} (x_1)    \cdots
  \check \Delta_{\partial_m} (x_m) \big ) \end{equation}
  for $x_1,   ..., x_m  \in  \check A$  is  an $m$-brace  in~$A$.
   \end{theor}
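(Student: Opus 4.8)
\textbf{Proof plan for Theorem~\ref{sDDjkgfjfgjDtrbucte}.} The goal is to show that the map $\mu^m$ defined by \eqref{cccdmm} is an $m$-brace in $A$, i.e.\ an $m$-bracket on $\check A$ that is a weak derivation in each of its $m$ arguments. Linearity in each variable is immediate from the construction, since each $\check\Delta_{\partial_i}$ is linear and multiplication in $A$ followed by $p$ is multilinear. So the heart of the matter is to verify the weak-derivation property in a single fixed variable; by the symmetry of the roles of the factors it suffices to treat the first variable, holding $x_2,\ldots,x_m\in\check A$ fixed.

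\textbf{Reduction to a single Fox derivative.} First I would dispose of the case $m=1$: here $\mu^1=\check\Delta_\partial$ for a single Fox derivative $\partial$, and I must produce an actual derivation $d:A\to A$ lifting it. The natural candidate is $d(x)=\sum_{a\in\pi}(x/a)_\partial\,(a^{-1}xa-x)$ for $x\in\pi$, extended linearly; note $p\circ d=\check\Delta_\partial\circ p$ because $p$ kills the commutator $a^{-1}xa-x$ only after passing to $\check A$ — more precisely $d$ differs from $\Delta_\partial$ by the term $-\sum_a (x/a)_\partial\,x=-\mathrm{aug}(\partial(x))\,x$, and I should instead simply take $d=\Delta_\partial$ composed appropriately. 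The cleaner route: show directly that $\Delta_\partial$ followed by $p$ is a weak derivation by exhibiting the derivation $D(x)=a^{-1}xa$-type correction. Concretely, define $d:A\to A$ on $\pi$ by $d(x)=\Delta_\partial(x)-\mathrm{aug}(\partial(x))\,x$ and check the Leibniz rule $d(xy)=d(x)y+xd(y)$ for $x,y\in\pi$ using $\partial(xy)=\partial(x)+x\partial(y)$ together with the already-proven identity from the proof of Lemma~\ref{sDDDtrbucte}; then $p\,d=\check\Delta_\partial\,p$ since $p(\mathrm{aug}(\partial(x))x)$ and the discrepancy lie in... this last point needs care, and I flag it as the first place routine-looking algebra must actually be done.

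\textbf{The general step via the $m=1$ case.} For $m\geq 2$, fix $x_2,\ldots,x_m\in\check A$ and lift each to $\check\Delta_{\partial_i}(x_i)\in A$. Then $x_1\mapsto \mu^m(x_1,x_2,\ldots,x_m)=p\big(\check\Delta_{\partial_1}(x_1)\cdot c\big)$ where $c=\check\Delta_{\partial_2}(x_2)\cdots\check\Delta_{\partial_m}(x_m)\in A$ is a fixed element. Using the $m=1$ case there is a derivation $d_1:A\to A$ with $p\,d_1=\check\Delta_{\partial_1}\,p$; then the map $A\to A$, $a\mapsto d_1(a)\cdot c$ followed by $p$ is a candidate lift, but $a\mapsto d_1(a)c$ is not a derivation. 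The fix is the standard trick: it suffices to lift $x_1\mapsto p(\check\Delta_{\partial_1}(x_1)c)$ to a derivation of $A$, and one checks that $a\mapsto d_1(a)\,c$ agrees modulo $A'$ with $a\mapsto \tfrac12(d_1(a)c+c\,d_1(a))$? — no; rather, because $p$ is cyclic-invariant, $p(\check\Delta_{\partial_1}(x_1)c)=p(c\,\check\Delta_{\partial_1}(x_1))$, and one wants a derivation $d':A\to A$ with $p(d'(a))=p(d_1(a)c)$. Here the honest approach is to mimic the proof of Lemma~\ref{Cbtlemmmmheor} / \cite[Lemma 4.4]{Cb}: show that for a fixed $c\in A$ the assignment $a\mapsto d_1(a)c$ composed with $p$ factors through a weak derivation because $d_1$ is a genuine derivation and right multiplication by $c$ is an $A$-bimodule-type operation that $p$ converts into something Leibniz-compatible. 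I expect \textbf{this is the main obstacle}: making precise why $p(d_1(\,\cdot\,)c)$ is a weak derivation in the one variable, i.e.\ constructing the lifting derivation of $A$ — the difficulty is purely that right multiplication spoils the Leibniz rule, and the resolution is that one instead directly checks, for $x,y\in\pi$, that $x\mapsto p(\check\Delta_{\partial_1}(x)c)$ satisfies the weak Leibniz rule by expanding $\check\Delta_{\partial_1}(xy)=\check\Delta_{\partial_1}(x)\cdot(\text{something})+(\text{something})\cdot\check\Delta_{\partial_1}(y)$ inside $\check A$; the needed identity is exactly the content that $\check\Delta_{\partial_1}$ is a weak derivation, which is the $m=1$ case, so the argument closes by induction once $m=1$ is nailed down. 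Finally I would remark that $\mu^m$ lands in $\check A$ and is well-defined on $\check A^m$ precisely by Lemma~\ref{sDDDtrbucte}, so no further checking of well-definedness is needed.
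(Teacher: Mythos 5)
Your proposal correctly isolates the difficulty (right multiplication by a fixed element destroys the Leibniz rule, so $x\mapsto p\bigl(\check\Delta_{\partial_1}(x)\,c\bigr)$ has no obvious lifting derivation), but it does not resolve it, and the concrete suggestions you make do not work. For $m=1$, the candidate $d(x)=\sum_{a\in\pi}(x/a)_\partial\,(a^{-1}xa-x)$ is not a derivation of $A$: comparing $d(xy)$ with $d(x)y+xd(y)$ for $x,y\in\pi$, the $x$-terms give $a^{-1}xya$ where the Leibniz rule requires $a^{-1}xay$. Moreover $p\circ d=0$ since $p(a^{-1}xa)=p(x)$, so even if it were a derivation it would lift the zero map rather than $\mu^1$; the correct lift of $\mu^1$ is the derivation $x\mapsto \operatorname{aug}(\partial(x))\,x$. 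More seriously, your inductive scheme for $m\geq 2$ does not close: a weak derivation is by definition a map \emph{induced by an honest derivation of $A$}, not a map satisfying a Leibniz-type identity modulo $A'$, so "checking the weak Leibniz rule for $x\mapsto p(\check\Delta_{\partial_1}(x)c)$" is not the required statement, and knowing the $m=1$ case supplies no derivation of $A$ lifting this map.

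The missing idea, which is the whole content of the paper's proof, is a change of cyclic order under $p$. Writing $G$ and $H$ for the products of the factors to the left and right of the $i$-th slot and expanding $\partial_i(x)=\sum_{a\in\pi}(x/a)\,a$, one has $p\bigl(G\,a^{-1}xa\,H\bigr)=p\bigl(aHG\,a^{-1}x\bigr)$ because $p(uv)=p(vu)$; this rotation places the group element $x$ at the extreme right. Then for any \emph{fixed} $F\in A$ the map carrying $x\in\pi$ to $\sum_{a\in\pi}(x/a)\,aFa^{-1}x$ is a genuine derivation of $A$: this follows in one line from $\partial_i(xy)=\partial_i(x)+x\,\partial_i(y)$, since the conjugating element for the $y$-terms becomes $xa$ and $(xa)F(xa)^{-1}xy=x\bigl(aFa^{-1}y\bigr)$. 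Taking $F=HG$ gives the required lift in every variable simultaneously (and $F=1$ handles $m=1$). Without this rotation your argument has no mechanism for producing the lifting derivation, so as written the proof is incomplete.
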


\begin{proof}   
 We need to prove that  $\mu^m$  is a weak derivation in all variables, i.e.,  that for any  $i=1,...,m$  and $x_1,..., x_{i-1}, x_{i+1},..., x_m\in \check A$, the  map
\begin{equation}\label{678} \check A \to \check A, \,\, x \mapsto \mu^m(x_1,  \ldots   , x_{i-1} , x   ,  x_{i+1}, \ldots   , x_{m})\end{equation} is induced by a derivation in~$  A$. 
   Set $$G= \check \Delta_{\partial_1} (x_1)   \cdots
  \check \Delta_{\partial_{i-1}} (x_{i-1}) \in A \quad {\rm {and}} \quad H= \check \Delta_{\partial_{i+1}} (x_{i+1})   \cdots
  \check \Delta_{\partial_{m}} (x_{im}) \in A.$$
   For $x  \in \pi$, we expand $\partial_i (x)= \sum_{a\in \pi}  (x/a) a$ with $(x/a) =(x/a) _{\partial_i}$. Then 
   $$\mu^m(x_1,  \ldots , x_{i-1} , x   ,  x_{i+1}, \ldots  ,  x_{m}) =p(  G   \check \Delta_{\partial_{i}} (x) H) $$
$$ =p(    \sum_{a\in \pi}  (x/a)   G  a^{-1} x a H) =p( \sum_{a\in \pi} (x/a) a HG  a^{-1} x )$$ where we use that $p(   G  a^{-1} x aH) =p(  a  HG  a^{-1} x )$.
   Thus,  the  map \eqref{678}  is induced by the linear map $ A\to A$ carrying any $x\in \pi$ to $    \sum_{a\in \pi} (x/a) a GH  a^{-1} x.  $ This map is a derivation. In fact,   for any $F\in A$, the linear map $ d:A\to A$ carrying any $x\in \pi$ to $    \sum_{a\in \pi} (x/a) a F  a^{-1} x  $ is a derivation in~$A$.
   Indeed, for
$x, y \in \pi$, we have 
$$d (x)=  \sum_{a\in \pi} (x/a) a F  a^{-1} x\,\, \,\, \,  {\rm {and}} \,\,\,    \quad d(y)=\sum_{a\in \pi} (y/a)  a F a^{-1} y.$$ Also, 
$$\partial_i(xy) =\partial_i(x) +x \partial_i (y)= \sum_{a\in \pi}  \big ( (x/a) a +   (y/a) x a \big )$$  
    and so 
   $$ d (xy)=\sum_{a\in \pi}  \big ( (x/a) a F a^{-1} xy  +   (y/a)  xa F (x a)^{-1} xy \big )   =d(x) y +x d(y).$$
  This completes the proof of the theorem. \end{proof}  
  

   Combining Theorem~\ref{sDDjkgfjfgjDtrbucte} with  Lemma~\ref{Cbtlemmmmheor}  we obtain the following.

\begin{corol}\label{sDDjmmmkgfjfgjDtrbucte}    For any 
  integers  $m,n\geq 1$  and  any  Fox derivatives $\partial_1, ..., \partial_m$ in~$ A$,  there is a unique 
$m$-brace $\mu^m_n$ in $ A_n^t$ such that  
 $$\mu^m_n({\rm tr} (x_1) ,    \ldots  , {\rm tr} ( x_m))={\rm tr} (    \Delta_{\partial_1} (x_1)      \cdots
   \Delta_{\partial_m} (x_m)   )  $$
   for all  $x_1,  ..., x_m \in A$.
 \end{corol}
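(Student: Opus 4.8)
The plan is to combine the two results stated just above in the most direct way. By Theorem~\ref{sDDjkgfjfgjDtrbucte}, the formula \eqref{cccdmm} defines an $m$-brace $\mu^m$ in $A$, that is, an $m$-bracket in $\check A$ which is a weak derivation in each of its $m$ variables. Then Lemma~\ref{Cbtlemmmmheor}, applied to this brace $\mu=\mu^m$ and the given integer $n$, yields a unique $m$-brace $\mu_n=\mu^m_n$ in $A^t_n$ making the diagram \eqref{dia1} commute; explicitly, $\mu^m_n({\rm tr}(x_1),\ldots,{\rm tr}(x_m))={\rm tr}(\mu^m(x_1,\ldots,x_m))$ for all $x_1,\ldots,x_m\in\check A$. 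The uniqueness in Corollary~\ref{sDDjmmmkgfjfgjDtrbucte} follows from the uniqueness in Lemma~\ref{Cbtlemmmmheor} together with the fact that ${\rm tr}(\check A)$ generates $A^t_n$.

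First I would unwind the notation to check that the displayed formula in the corollary is literally the commutativity of \eqref{dia1} for $\mu=\mu^m$. One has $p(\check\Delta_{\partial_1}(x_1)\cdots\check\Delta_{\partial_m}(x_m))=\mu^m(p(x_1),\ldots,p(x_m))$ where $p:A\to\check A$ is the projection, and ${\rm tr}(p(x_i))={\rm tr}(x_i)$ since the trace on $A_n$ factors through $\check A$ (recall ${\rm tr}(A')=0$). Also $\Delta_{\partial_i}(x_i)$ and $\check\Delta_{\partial_i}(p(x_i))$ have the same image under $p$ and hence the same trace, so ${\rm tr}(\Delta_{\partial_1}(x_1)\cdots\Delta_{\partial_m}(x_m))={\rm tr}(\mu^m(p(x_1),\ldots,p(x_m)))=\mu^m_n({\rm tr}(x_1),\ldots,{\rm tr}(x_m))$. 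This is exactly the asserted identity, now for arbitrary $x_1,\ldots,x_m\in A$ rather than just for $x_i\in\check A$, which is the mild strengthening needing the remark above.

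There is essentially no obstacle here: the corollary is a formal consequence of the theorem and the lemma, and the only thing to be careful about is the bookkeeping between $A$, $\check A=A/A'$, the maps $\Delta_\partial$ and $\check\Delta_\partial$, and the two traces ${\rm tr}:A\to A_n$ and ${\rm tr}:\check A\to A^t_n$. I would therefore write the proof as a short paragraph: invoke Theorem~\ref{sDDjkgfjfgjDtrbucte} to get the brace $\mu^m$ on $\check A$, invoke Lemma~\ref{Cbtlemmmmheor} to push it to $A^t_n$, and then translate \eqref{dia1} into the displayed formula using the identifications above, noting that uniqueness is inherited from Lemma~\ref{Cbtlemmmmheor}.
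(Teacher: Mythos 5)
Your proposal is correct and is exactly the paper's argument: the paper derives this corollary in one line by combining Theorem~\ref{sDDjkgfjfgjDtrbucte} with Lemma~\ref{Cbtlemmmmheor}, and your extra bookkeeping (identifying $\operatorname{tr}\circ p$ with $\operatorname{tr}$ on $A$ and $\check\Delta_{\partial_i}\circ p$ with $p\circ\Delta_{\partial_i}$) correctly fills in the details the paper leaves implicit.
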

   


\subsection{Gate derivatives} Consider  a topological space~$X$ with base point~$\ast $. A \emph{based gate} in~$X$ is  a gate $C \subset X \setminus  \{\ast\} $    endowed with  a path $\gamma:[0,1]\to X$ such that $\gamma (0)=\ast$   and $\gamma ( 1) \in C$.  We show   that such a pair $(C, \gamma)$ gives rise to a Fox derivative   in the group  algebra $ R[\pi]$  where $\pi=\pi_1(X, \ast)$. 
  
Pick any   loop $a:S^1\to X$ based at $\ast$ and  transversal to~$C$.  For   $p\in a^{-1}(C) \subset S^1 $,  we  let $a^p$ be  the product of the following three  paths in~$X$:   the    restriction of~$a$ to the arc $(1,p) \subset S^1$; any path~$\beta$  in~$C$ from $a(p) $  to  $\gamma ( 1) $;   the path  inverse to~$\gamma$. Clearly,  $a^p$ a loop  in~$X$  based at~$\ast$. Since all loops in~$C$ are contractible in~$X$, the element $[a^p]$ of~$ \pi$ represented by this loop  does not depend on the choice of~$\beta$. 
  Set 
  \begin{equation}\label{tweeotwo3} \partial(a)= \sum_{p \in  a^{-1}(C)} \varepsilon_{p}(a) \, [a^p] \in  R [\pi]     \end{equation}
  where $\varepsilon_{p}(a)=\pm 1$ is the  crossing sign  defined in   Section~\ref{coordinateddd2w-}.

  \begin{lemma}\label{svvvvtrbucte}  Formula \eqref{tweeotwo3} determines a well-defined map $\partial : \pi \to R [\pi]$. Its linear extension $ R [\pi] \to R [\pi]$   is a Fox derivative.  
\end{lemma}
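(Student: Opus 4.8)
The plan is to prove the two assertions of the lemma in turn: first that the sum in~\eqref{tweeotwo3} depends only on the homotopy class of the based loop~$a$, so that $\partial$ is a well-defined map $\pi\to R[\pi]$, and then that its linear extension $R[\pi]\to R[\pi]$ satisfies $\partial(xy)=\partial(x)+x\,\partial(y)$ for all $x,y\in\pi$, which is exactly the defining property of a Fox derivative. Throughout I would use, as in the proof of Lemma~\ref{strlelemuctess}, that every element of~$\pi$ is represented by a $\ast$-based loop transversal to~$C$ (push a representative off~$C$ using the product structure of the cylinder neighborhood $C\times[-1,1]$; this may be done rel~$\ast$ since $\ast\notin C$), and that any two $\ast$-based loops transversal to~$C$ that are homotopic in the class of $\ast$-based loops are related by a finite sequence of deformations of two types and their inverses: (a)~deformations through $\ast$-based loops transversal to~$C$, and (b)~deformations pushing a branch of the loop lying in $X\setminus C$ across~$C$, thereby creating two new crossings with~$C$.

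For the first assertion, a deformation of type~(a) leaves the finite set $a^{-1}(C)$, the crossing signs $\varepsilon_p(a)$, and the classes $[a^p]\in\pi$ unchanged, hence preserves~\eqref{tweeotwo3}. A deformation of type~(b) introduces two new points $p,p'\in a^{-1}(C)$ with $\varepsilon_p(a)=-\varepsilon_{p'}(a)$, and the point is to show that $[a^p]=[a^{p'}]$, so that the two new terms of~\eqref{tweeotwo3} cancel. Relabelling so that~$p$ is traversed before~$p'$, the restriction of~$a$ to the arc from~$1$ to~$p'$ is the restriction of~$a$ to the arc from~$1$ to~$p$ followed by the short sub-arc~$\delta$ of~$a$ joining~$p$ to~$p'$; this sub-arc is the tip of the finger produced by the move, so it lies in the half-cylinder $C\times[0,1]$ (or $C\times[-1,0]$) and hence is homotopic, rel endpoints and inside that half-cylinder, to its projection~$\overline{\delta}$ to~$C$. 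Choosing a path~$\beta'$ in~$C$ from $a(p')$ to $\gamma(1)$, we find that $a^{p'}$ is homotopic to the composition of the restriction of~$a$ to the arc from~$1$ to~$p$, the path~$\overline{\delta}$, the path~$\beta'$, and~$\gamma^{-1}$; since $\overline{\delta}\,\beta'$ is a path in~$C$ from $a(p)$ to $\gamma(1)$, the independence of $[a^p]$ from the auxiliary path in~$C$ gives $[a^{p'}]=[a^p]$. Thus $\partial(a)$ depends only on $[a]\in\pi$.

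For the second assertion, I would represent $x,y\in\pi$ by $\ast$-based loops $a,b$ transversal to~$C$ and consider the product loop $ab$, traversing first~$a$ and then~$b$. It is again transversal to~$C$: near the two points of~$S^1$ at which $ab$ passes through~$\ast$, the composite $H\circ(ab)$ equals $H(\ast)\neq1$, while elsewhere $ab$ agrees, up to reparametrizing the source, with~$a$ or with~$b$. Under the evident identification of domains, $(ab)^{-1}(C)$ is the disjoint union of $a^{-1}(C)$ and $b^{-1}(C)$, the crossing signs being unchanged since they depend only on the local behaviour of the loop. For a point~$p$ coming from $a^{-1}(C)$, the arc of~$ab$ from the basepoint to~$p$ is just the corresponding arc of~$a$, so $[(ab)^p]=[a^p]$; for a point~$q$ coming from $b^{-1}(C)$, this arc is the whole loop~$a$ followed by the corresponding arc of~$b$, so $[(ab)^q]=[a]\,[b^q]=x\,[b^q]$. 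Substituting into~\eqref{tweeotwo3} written for~$ab$ then yields $\partial(xy)=\partial(x)+x\,\partial(y)$, so the linear extension of~$\partial$ is a Fox derivative.

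I expect the only genuinely delicate point to be the type-(b) cancellation in the first assertion: one must verify carefully that the short sub-arc created by a finger move can be homotoped, rel its endpoints and within the cylinder neighborhood, onto a path lying in the gate~$C$, so that the path-independence of $[a^p]$ applies. This is the same mechanism already used in the proof of Lemma~\ref{strlelemuctess}, and once it is in place the rest reduces to routine bookkeeping with based loops.
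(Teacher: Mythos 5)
Your proposal is correct and follows essentially the same route as the paper, which simply invokes the existence of transversal representatives, reuses the type-(a)/type-(b) deformation argument from the proof of Lemma~\ref{strlelemuctess} for well-definedness, and derives $\partial(ab)=\partial(a)+a\,\partial(b)$ directly from the decomposition $(ab)^{-1}(C)=a^{-1}(C)\sqcup b^{-1}(C)$. Your write-up just makes explicit the cancellation $[a^p]=[a^{p'}]$ via the retraction of the half-cylinder onto $C$ and the path-independence of $[a^p]$, details the paper leaves to the reader.
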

  
\begin{proof} It is clear that   all elements of~$\pi$  can be represented by loops based at $\ast$ and transversal to~$C$.  The arguments  in the proof  of Lemma~\ref{strlelemuctess} can be used to show   that  if two   loops    $a,a'$ based at $\ast$ and transversal to~$C$  represent the same element of~$\pi$, then 
 $\partial (a)=\partial (a')$. If $a,b$ are   loops  based at $\ast$ and     transversal to~$C$, then   so is their product,  and it follows    from the definitions that 
$\partial(ab)= \partial(a) + a \partial (b)$. This implies the second claim of the lemma. 
 \end{proof} 

Combining Lemma~\ref{svvvvtrbucte} with the results of Section~\ref{Constructions of brackets} we conclude   that each   sequence of $m\geq 1$ based gates in~$X$  (not necessarily disjoint or   distinct)  gives rise to  an  $m$-brace in $A=R[\pi]$  and to  $m$-braces in the    algebras $\{A^t_n\}_n$.   In particular, 
a sequence of~$m $ copies of the same based gate $ (C, \gamma)$ determines a cyclically symmetric  $m$-brace  in~$A$. We  leave it to the reader to verify that this brace does not depend on~$\gamma$ and coincides with the $m$-brace  $\mu^m_C$  defined  in Section~\ref{Gate brackets-}.

    \end{document}